\definecolor{darkblue}{rgb}{0,0,0.6}
\newtheorem*{rep@theorem}{\rep@title}
\newcommand{\newreptheorem}[2]{%
	\newenvironment{rep#1}[1]{%
		\def\rep@title{#2 \ref{##1}}%
		\begin{rep@theorem}}%
		{\end{rep@theorem}}}
\newtheorem{proposition}{Proposition}[section]
\newtheorem{theorem}[proposition]{Theorem}
\newtheorem{corollary}[proposition]{Corollary}
\newtheorem{lemma}[proposition]{Lemma}
\theoremstyle{definition}
\newtheorem{definition}[proposition]{Definition}
\newtheorem{question}[proposition]{Question}
\theoremstyle{remark}
\newtheorem{remark}[proposition]{Remark}
\newtheorem*{remark*}{Remark}
\numberwithin{equation}{section}
\newcommand{\sm}{\setminus}
\newcommand{\Q}{\mathbb{Q}}
\newcommand{\R}{\mathbb{R}}
\newcommand{\Z}{\mathbb{Z}}
\newcommand{\ul}{\underline}
\newcommand{\dvol}{\textrm{d}vol}
\newcommand{\vol}{\textrm{Vol}}
\newcommand{\F}{\mathbb{F}}
\newcommand{\frs}{\mathfrak{s}}
\newcommand{\frS}{\mathfrak{S}}
\newcommand{\cD}{\mathcal{D}}
\newcommand{\cH}{\mathcal{H}}
\newcommand{\cM}{\mathcal{M}}
\DeclareMathOperator{\Sym}{Sym}
\DeclareMathOperator{\spin}{spin}
\begin{document}

\title[Complete PSC 4-manifolds]{Complete Riemannian $4$-manifolds with uniformly positive scalar curvature}

 \author[O.\ Chodosh]{Otis Chodosh}
\address{Department of Mathematics, Bldg.\ 380, Stanford University, Stanford, CA 94305, USA}
 \email{ochodosh@stanford.edu}

 \author[D.\ Maximo]{Davi Maximo}
 \address{Department of Mathematics, University of Pennsylvania, Philadelphia, PA 19104, USA}
 \email{dmaxim@math.upenn.edu}

\author[A.\ Mukherjee]{Anubhav Mukherjee}
\address{Department of Mathematics, Princeton University, Princeton, NJ 08540, USA}
\email{anubhavmaths@princeton.edu}

\begin{abstract}
We obtain topological obstructions to the existence of a complete Riemannian metric with uniformly positive scalar curvature on certain (non-compact) $4$-manifolds. In particular, such a metric on the interior of a compact contractible $4$-manifold uniquely distinguishes the standard $4$-ball up to diffeomorphism among Mazur manifolds and up to homeomorphism in general. 

We additionally show there exist uncountably many exotic $\R^4$'s that do not admit such a metric and that any (non-compact) tame $4$-manifold has a smooth structure that does not admit such a metric. 
\end{abstract}

\maketitle

\section{Introduction}\label{sec:intro}

This article is concerned with the relationship between the \emph{scalar curvature} of a Riemannian manifold and the topology of the underlying smooth manifold. Scalar curvature (defined in \eqref{eq:defi-scalar-curv}) $R\in C^\infty(M)$ is the weakest curvature invariant of a Riemannian manifold $(M,g)$  but the existence of positive scalar curvature can place strong topological constraints on the underling topology. We survey some known results along these lines in Section \ref{sec:overview-psc} and Appendix \ref{app:survey}.   

One easily sees that for $n\geq 3$, the standard $\mathbb{R}^n$ admits a complete metric with uniformly positive  scalar curvature $R\geq 1$ by capping off a half-cylinder $(-\infty,0] \times S^{n-1}$ with a hemisphere $S^n_+$ (and then smoothing). A natural question is whether or not the existence of such a metric \emph{distinguishes} the standard $\mathbb{R}^n$ from other contractible manifolds. Indeed, for 3-manifolds this is true: work of Chang--Weinberger--Yu \cite[Theorem 1]{CWY10} shows that $\mathbb{R}^3$ is the only contractible $M^3$ that admits a complete Riemannian metric with uniformly positive scalar curvature (cf.\ Remark \ref{rema:CWY-result}). See also \cite{BBMM,Dong,wang2023topology}. 

Every non-compact smooth 4-manifold admits a (possibly incomplete) metric of uniformly positive scalar curvature (c.f. \cite[Theorem 0.1]{Rosen06}). Thus in this article, we study the existence of a \emph{complete} Riemannian metric with uniformly positive scalar curvature on non-compact 4-manifolds. We are able to completely resolve the classification problem in the case that $M$ is assumed to be the interior of a \emph{Mazur manifold} \cite{mazur}, namely $M$ is the interior of a compact contractible smooth $4$-manifold  with boundary $W$ admitting a (smooth) handle decomposition with one 0-handle, one 1-handle, and one 2-handle. 

\begin{theorem}\label{theo:Mazur}
Suppose that $M$ is the interior of a Mazur manifold $W$. If $M$ admits a complete Riemannian metric with scalar curvature $R\geq 1$ then $W$ must be diffeomorphic to the $4$-ball $B^4$.
\end{theorem}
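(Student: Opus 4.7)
The plan is to extract from the complete uniformly positive scalar curvature metric on $M$ a metric of positive scalar curvature on the boundary $\partial W$ itself, and then combine the $3$-dimensional PSC classification with topological constraints specific to Mazur manifolds.

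For the geometric step, I would run a Gromov $\mu$-bubble construction in the cylindrical end $E \cong \partial W \times [0,\infty)$ of $M$. For a sequence of parameters $t_i \to \infty$, choose weights $h_i \colon M \to \mathbb{R}$ together with barrier regions so that interior minimizers of
\[
\mathcal{A}_{h_i}(\Omega) = \mathcal{H}^{3}(\partial^{*}\Omega) - \int_{\Omega} h_i \, \mathrm{d}\mathrm{vol}
\]
are trapped in a cylindrical slab $\partial W \times [t_i, t_i + C]$. Standard existence and dimension-four regularity produce closed smooth stable $\mu$-bubbles $\Sigma_i$. The stability inequality, together with $R \ge 1$ on $M$ and the Gauss equation, permits a Schoen--Yau--Gromov first-eigenfunction conformal descent that endows each $\Sigma_i$ with a metric of uniformly positive scalar curvature.

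The delicate step, and the main obstacle I anticipate, is to force $\Sigma_i$ to be isotopic to the level set $\partial W \times \{t_i\}$, so that $\Sigma_i \cong \partial W$ as smooth manifolds. This requires careful barrier placement and an incompressibility-type argument ruling out the possibility that $\Sigma_i$ carries extra exotic homology-sphere summands absorbed into the compact core, or splits into multiple components only one of which captures the cylinder direction. Assuming this can be arranged, a Cheeger--Gromov-type convergence of the conformally rescaled PSC metrics on $\Sigma_i \cong \partial W$ produces a smooth metric of positive scalar curvature on $\partial W$ itself.

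By the Perelman-confirmed $3$-dimensional PSC classification, every closed PSC $3$-manifold is a connected sum of spherical space forms and copies of $S^2 \times S^1$; for a homology $3$-sphere this collapses to a spherical space form with perfect fundamental group, leaving only $S^3$ and the Poincar\'e sphere $S^3/2I$. Since $\partial W$ bounds the contractible Mazur manifold $W$ (so $\sigma(W) = 0$ on a spin $4$-manifold), its Rokhlin invariant vanishes, whereas the Poincar\'e sphere has Rokhlin invariant $1$; hence $\partial W \cong S^3$. Finally, for a Mazur manifold with $\partial W \cong S^3$, a Property~R-type argument applied to the attaching knot $K \subset S^1 \times S^2$ of the $2$-handle forces $K$ to be isotopic to a standard meridian of the $1$-handle, so the $2$-handle cancels the $1$-handle smoothly and $W \cong B^4$.
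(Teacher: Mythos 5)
Your final Property~R step and the overall strategy (extract PSC on a $3$-manifold near infinity, classify, then cancel the handles via Gabai) are the same as the paper's, but there are two genuine gaps in the middle. First, the geometric step: you require the $\mu$-bubble $\Sigma_i$ in the end $\partial W\times[0,\infty)$ to be \emph{isotopic} to a level set $\partial W\times\{t_i\}$, and you flag this yourself as the main obstacle ("assuming this can be arranged"). There is no reason a stable $\mu$-bubble should be isotopic to the cross-section; it is only guaranteed to be a closed embedded hypersurface separating the two ends of the band. The paper never needs this: it only uses that a separating hypersurface in $\partial W\times\R$ projects to $\partial W$ with degree one, so the fact that $\Sigma_i$ carries PSC forces $\partial W$ to be of the PSC form by the domination result (\cref{prop:dominate3d}, packaged as \cref{prop:Rade} and \cref{coro:boundary-tame}). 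With that replacement your Cheeger--Gromov limiting argument also becomes unnecessary (and if $\Sigma_i\cong\partial W$ did hold, it would be superfluous anyway, since a single PSC metric on $\Sigma_i$ already gives one on $\partial W$).

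Second, the topological step is wrong as stated: for an integral homology sphere the PSC classification does \emph{not} collapse to "a spherical space form with perfect fundamental group." The homology-sphere condition only kills the $S^2\times S^1$ summands and forces each spherical summand to be $\pm P$; connected sums $(\#_{\ell=1}^L P)\,\#\,(\#_{m=1}^M -P)$ are PSC integral homology spheres and must be dealt with (this is exactly \cref{coro:poinc-PSC}). Your Rokhlin-invariant argument rules out a single $\pm P$, but it cannot exclude, e.g., $P\# -P$, whose Rokhlin invariant (and Heegaard Floer $d$-invariant) vanish. The paper handles this in \cref{prop:poincare-bdry}: the $d$-invariant forces $L=M$, and then Taubes' periodic end theorem is invoked to show $\#_{\ell=1}^L(P\# -P)$ with $L>0$ cannot bound a contractible smooth $4$-manifold. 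Some input of this strength is unavoidable, so your proposal is missing a key ingredient at this point; once both gaps are repaired, the concluding Property~R/handle-cancellation argument goes through exactly as in the paper.
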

(Of course it follows from this that $M$ is diffeomorphic to the standard $\R^4$.) The main ingredient in the proof of Theorem \ref{theo:Mazur} is a complete characterization of the homeomorphism type of contractible tame 4-manifolds admitting a complete metric of uniformly positive scalar curvature as follows:
\begin{theorem}\label{thm:b4}
Suppose that $M$ is the interior of a compact contractible smooth 4-manifold with boundary $W$.  If $M$ admits a complete Riemannian metric with uniformly  scalar curvature $R\geq 1$ then $W$ must be homeomorphic to the $4$-ball $B^4$.
\end{theorem}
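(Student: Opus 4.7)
The plan is to reduce Theorem~\ref{thm:b4} to a statement about the fundamental group of $\partial W$ and then to build a geometric obstruction via $\mu$-bubbles in the end of $M$. First, a topological reduction: since $W$ is compact contractible, $\partial W$ is a homology 3-sphere, and by Freedman's solution of the 4-dimensional topological Poincar\'e conjecture together with his theorem that every homology 3-sphere bounds a unique contractible topological 4-manifold, $W$ is homeomorphic to $B^4$ if and only if $\partial W$ is homeomorphic to $S^3$. By Perelman's resolution of the 3-dimensional Poincar\'e conjecture this is equivalent to $\pi_1(\partial W)=1$. Hence it suffices to show that $M$ admits no complete metric with $R\geq 1$ whenever $\pi_1(\partial W)\neq 1$.

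Assume for contradiction that $\pi_1(\partial W)\neq 1$ and such a metric $g$ exists. The end of $M$ is diffeomorphic to $\partial W\times[0,\infty)$, so $M$ is a simply-connected 4-manifold whose fundamental group at infinity is non-trivial. The next step I would try is a warped $\mu$-bubble construction in the end, in the spirit of recent work on 4-dimensional PSC: choose a smooth proper function $h$ on $M$ which grows suitably along the end, and minimize the warped area functional $\int dA - \int h\, dV$ among hypersurfaces separating a large compact set from infinity. The stability inequality, combined with $R\geq 1$ and the Gromov--Lawson/Schoen--Yau conformal trick, should yield a closed stable $\mu$-bubble $\Sigma^{3}\subset M$ carrying, after a conformal change, a metric of positive scalar curvature. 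One would like to arrange the construction so that $\Sigma^{3}$ detects the non-trivial topology of $\partial W$; a natural route is to pass to a cover of the end corresponding to a non-trivial subgroup of $\pi_1(\partial W)$, lift the bubble minimization there, and descend to a non-compact complete contractible $3$-manifold with uniformly positive scalar curvature which is not $\R^{3}$, contradicting the theorem of Chang--Weinberger--Yu \cite[Theorem~1]{CWY10}.

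The main obstacle is ensuring that the descended $3$-manifold really captures the topology at infinity. A naive minimization could simply bubble off a $3$-sphere in the end, which admits PSC without constraint; moreover, the complete metric $g$ need not have bounded geometry, so compactness and regularity of the $\mu$-bubbles (both downstairs in $M$ and upstairs in a cover of the end) are non-trivial. The technical heart of the argument is therefore the simultaneous choice of warping function $h$ that pushes the minimizer deep into the end and forces it to wrap around a non-trivial loop in $\partial W$, together with the geometric estimates needed to carry the scalar curvature lower bound through the dimension descent while preserving completeness and uniform positivity on the resulting $3$-manifold. Given the subtlety of making the fundamental group at infinity visible to a $\mu$-bubble, I expect that an alternative coarse index or enlargeability-at-infinity argument may well be cleaner, but any route must at some point convert the non-triviality of $\pi_1(\partial W)$ into an analytic obstruction, and that conversion is the crux.
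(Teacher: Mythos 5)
Your opening reduction is fine: since $\partial W$ is a homology sphere, Freedman plus Perelman do show that $W$ is homeomorphic to $B^4$ if and only if $\pi_1(\partial W)=1$. The gap is in the second half: the implication ``$\pi_1(\partial W)\neq 1$ $\Rightarrow$ no complete metric with $R\geq 1$ on $M$'' cannot be proved by a purely geometric argument in the end, because the fundamental group at infinity is not by itself visible to $\mu$-bubbles. The Poincar\'e homology sphere $P$ has non-trivial $\pi_1$ yet admits PSC, and by Kazaras' theorem (quoted in Remark \ref{construction}, \cite{Kazaras}) every PSC $3$-manifold, in particular $P$, bounds a compact $4$-manifold whose interior carries a complete metric with uniformly positive scalar curvature. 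The $\mu$-bubble construction only sees the end $\partial W\times[0,\infty)$, not the contractibility of the filling, so an end modeled on $P$ is geometrically indistinguishable from ends that genuinely do carry such metrics; no choice of warping function $h$ or passage to covers can force the bubble to ``wrap around'' a loop of $\partial W$ and yield a contradiction. Relatedly, your proposed descent to Chang--Weinberger--Yu is not coherent as stated: the stable $\mu$-bubble is a \emph{closed} $3$-manifold, covers of the end are $4$-dimensional, and there is no mechanism producing a complete non-compact contractible $3$-manifold with uniformly positive scalar curvature from this data.

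What the geometric input can give — and what the paper extracts — is only that $\partial W$ itself admits PSC: the exhaustion of \cref{prop:gromov-mu-bubble-exhaustion} eventually places a closed PSC hypersurface separating the ends of the collar $\partial W\times\R$, and the degree-one projection argument (\cref{prop:Rade}, via \cref{prop:dominate3d}) transfers PSC to $\partial W$ (\cref{coro:boundary-tame}). Combined with $\partial W$ being an integral homology sphere (\cref{lemm:bdry-contract}), the PSC classification pins $\partial W$ down to $S^3$ or $(\#_{\ell=1}^L P)\#(\#_{m=1}^M -P)$ (\cref{coro:poinc-PSC}). Eliminating the latter is where the smooth contractible filling is essential and the argument becomes topological rather than geometric: the Ozsv\'ath--Szab\'o $d$-invariant vanishes for boundaries of smooth contractible $4$-manifolds, forcing $L=M$ \cite{osd}, and Taubes' periodic end theorem \cite{taubesp} rules out $\#_\ell(P\#-P)$ bounding a smooth contractible $4$-manifold; Freedman \cite{Freedman} then gives $W$ homeomorphic to $B^4$ (\cref{prop:poincare-bdry}). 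Your proposal is missing precisely this second, Floer/gauge-theoretic half, and no refinement of the $\mu$-bubble step can substitute for it.
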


We note that Chang--Weinberger--Yu \cite{CWY10} previously observed (using different methods) that certain contractible 4-manifolds cannot admit such a metric. See also \cite{BW,CWY,CWY2,weinberger2023gromovs}.

Our methods also show that uniformly positive scalar curvature can detect some exotic smooth structures. While it is not known whether exotic 4-balls can exist, there exists uncountably many non-diffeomorphic $\mathbb{R}^4$'s \cite{df, Gomp93, taubesp}. We show that many do not admit a complete metric with uniformly positive scalar curvature:
\begin{theorem}\label{uncountable R^4}
    There exist uncountably many exotic $\R^4$ which do not admit a complete Riemannian metric with uniformly positive scalar curvature $R\geq1$.
\end{theorem}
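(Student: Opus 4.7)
The plan is to combine Theorem~\ref{thm:b4} with the Taubes--Gompf--DeMichelis--Freedman continuum of exotic $\R^4$'s, using as the "compact obstruction" a fixed smoothly embedded contractible 4-manifold with non-standard boundary. Note that one cannot hope to apply Theorem~\ref{thm:b4} directly to an exotic $\R^4$, because any such $R$ is homeomorphic to $\R^4$ and so any compactification $R \cong \mathrm{int}(W)$ has $\partial W \cong_{\mathrm{top}} S^3$ automatically; what is needed is a relative version of the obstruction, detected on a smoothly embedded compact piece.

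\textbf{Setup.} Fix a smooth compact contractible 4-manifold $W$ whose boundary $\Sigma := \partial W$ is a homology 3-sphere not homeomorphic to $S^3$, e.g.\ a Mazur manifold bounded by a hyperbolic Brieskorn sphere. By Theorem~\ref{thm:b4}, $\mathrm{int}(W)$ admits no complete Riemannian metric with $R \geq 1$. Next, using Taubes's continuum of exotic $\R^4$'s \cite{taubesp} and the end-sum construction of Gompf \cite{Gomp93}, I would build an uncountable set $\{R_\alpha\}_{\alpha \in \mathcal{A}}$ of mutually non-diffeomorphic exotic $\R^4$'s, each smoothly containing a copy of $W$. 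Concretely: pick a single exotic $R_0$ into which $W$ embeds smoothly (this exists since $W$ embeds topologically into standard $\R^4$ by Freedman, and a generic smoothing of that embedding is exotic); then end-sum $R_0$ against each member of Taubes's continuum to produce the family $\{R_\alpha\}$, each still containing $W$ smoothly.

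\textbf{Transferring the obstruction.} Suppose toward contradiction that some $R_\alpha$ admits a complete metric $g$ with $R_g \geq 1$. The restriction $g|_{\mathrm{int}(W)}$ has $R \geq 1$ but is incomplete, so Theorem~\ref{thm:b4} does not apply directly. I would proceed by either (a) modifying $g$ outside a slightly shrunken $W' \subset W$ (chosen so that $\partial W'$ is a smooth hypersurface parallel to $\partial W$ with positive mean curvature in $(R_\alpha, g)$, obtained from a smoothed distance function) and replacing the exterior by a Gromov--Lawson-style tapered collar $\partial W' \times [0,\infty)$ preserving $R \geq c > 0$, yielding a complete PSC metric on a manifold diffeomorphic to $\mathrm{int}(W)$ and hence a contradiction with $\Sigma \not\cong_{\mathrm{top}} S^3$; or (b) reproducing the $\mu$-bubble/minimal-surface argument behind Theorem~\ref{thm:b4} directly inside a compact neighborhood of $W$ in $R_\alpha$, using the completeness of $(R_\alpha, g)$ to provide the ambient geometric room while localizing the topological conclusion to $\partial W$. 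Once either option is in hand, we conclude that no $R_\alpha$ admits a complete PSC metric, and the uncountable continuum provided by Taubes yields the desired family.

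\textbf{Main obstacle.} The hard step is the transfer. Option (a) is delicate because $\Sigma$ itself need not carry a PSC metric (as for hyperbolic Brieskorn spheres), so the Gromov--Lawson bending cannot simply graft on a product $\Sigma$-cylinder; one must exploit the mean curvature of $\partial W'$ in $(R_\alpha, g)$ as the fuel for maintaining positivity through the bending. Option (b) relies on the proof of Theorem~\ref{thm:b4} proceeding via a compactly supported construction (e.g.\ a $\mu$-bubble of fixed geometric size), which is the most plausible mechanism but must be verified against the actual argument. Either way, extracting a relative/quantitative version of Theorem~\ref{thm:b4} that tolerates a smoothly embedded $W$ inside a larger complete PSC manifold is the crux of the proof.
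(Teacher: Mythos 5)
The crux of your plan---the ``transfer'' step---is not merely delicate, it is false, so no amount of work on options (a) or (b) can close the gap. A smoothly embedded compact contractible $W$ with $\partial W$ not homeomorphic to $S^3$ inside a complete uniformly PSC $4$-manifold yields no contradiction: the double of a Mazur manifold is diffeomorphic to $S^4$, so such a $W$ embeds smoothly in $S^4$ and hence in the \emph{standard} $\R^4$, which certainly carries complete metrics with $R\geq 1$. Thus the hoped-for ``relative version'' of \cref{thm:b4}, detected on a compact piece, cannot exist; this is exactly the point of \cref{rema:cpt-set} that the method is insensitive to compact sets. Concretely, option (a) would have to graft a complete uniformly PSC end onto $\partial W'\cong\partial W$, which admits no PSC metric, and positive mean curvature of $\partial W'$ cannot substitute for that (the counterexample above shows no mechanism can); option (b) fails because the $\mu$-bubble band must have width $D(\Lambda)$ dictated by the curvature bound, while the collar of $\partial W$ in $(R_\alpha,g)$ may be metrically thin, so the PSC hypersurface produced need not lie in the collar and need not admit a nonzero-degree map to $\partial W$. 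The exhausting hypersurfaces of \cref{prop:gromov-mu-bubble-exhaustion} escape to the end of $R_\alpha$, which is topologically $S^3\times\R$, so the homeomorphism-level argument behind \cref{thm:b4} sees nothing.

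The paper's proof locates the obstruction at infinity, in the smooth structure of the end, rather than on an embedded compact piece. It takes $X=B^4\setminus\nu(D)$, the complement of a slice disk for the pretzel knot $P(-3,-3,3)$, so $\partial X=Y=S^3_0(K)$, and forms $R_c=X\cup CH_c$ where the $CH_c$ are Gompf's Stein Casson handles (uncountably many, indexed by a Cantor set), giving admissible exhaustions by Stein cobordisms. Gabai's taut foliation on $Y$, perturbed via Eliashberg--Thurston and capped off symplectically, produces a contact structure whose reduced twisted contact invariant is nonzero; Stein functoriality makes the induced Heegaard Floer cobordism maps to $\ul{HF}^+_\textnormal{red}(-Y,\omega)$ nonzero, whereas a complete metric with $R\geq 1$ would force, via \cref{exhaustion}, an exhaustion with PSC (hence L-space) boundaries through which these maps factor and vanish (\cref{non-trivial HE}). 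So the detection is Floer-theoretic and end-sensitive; to salvage your outline you would need to replace the embedded-$W$ obstruction by an invariant of the end's smooth structure, which is precisely what the contact-invariant/end-Floer argument supplies.
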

The above constructed exotic $\R^4$'s are small exotic $\R^4$, i.e. they can be smoothly embedded in standard $\R^4$ and construction of such small exotic $\R^4$ is motivated from the work of Gadgil \cite{gadgil2009open}. Using the same techniques as the proof of \cref{uncountable R^4} we can also obstruct the existence of certain uniformly positive curvature conditions namely bi-Ricci curvature on large exotic $\R^4$'s i.e. those exotic $\R^4$ that cannot be smoothly embedded in standard $\R^4$. 

It is known from the work of Kazaras \cite{Kazaras} that every 3-manifold that admits a positive scalar curvature metric bounds a compact 4-manifold whose interior supports a complete uniformly positive scalar curvature metric (see \cref{construction}). In fact, for general tame $4$-manifolds, we show that it's always possible to change the smooth structure to prevent the existence of such a metric: 
\begin{theorem}\label{thm:compact}
Suppose $W$ is a smooth 4-manifold with boundary. Then the interior of $W$ admits a smooth structure that does not admit a complete Riemannian metric with uniformly positive scalar curvature $R\geq1$.
\end{theorem}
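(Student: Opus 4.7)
The plan is to modify the given smooth structure on $M := \operatorname{int}(W)$ by an end-sum with an exotic $\R^4$ of the type produced in \cref{uncountable R^4}, and then argue that the obstruction is inherited.

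\textbf{Construction.} Assume without loss of generality that $M$ is connected (else apply the construction componentwise). By \cref{uncountable R^4}, there exists a small exotic $\R^4$, call it $R$, admitting no complete Riemannian metric with uniformly positive scalar curvature. Select proper smooth rays $\gamma \subset M$ and $\gamma_0 \subset R$ going to infinity, and form the smooth end-sum
\[
  M' := M \mathbin{\natural_e} R,
\]
by excising open smooth tubular neighborhoods of $\gamma, \gamma_0$ and gluing along the resulting cylindrical boundaries. Because $R$ is topologically $\R^4$, and because end-summing with the standard $\R^4$ is topologically trivial (the added $\R^4$ absorbs into the existing end of $M$), there is a homeomorphism $M' \cong_{\operatorname{top}} M$. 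Thus $M'$ equips the underlying topological manifold $\operatorname{int}(W)$ with a (generally exotic) smooth structure.

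\textbf{Transfer of obstruction.} It remains to show that $M'$ admits no complete uniformly positive scalar curvature metric. Suppose for contradiction that $g$ is such a metric. By the end-sum construction, $R \setminus \nu(\gamma_0)$ is a codimension-$0$ smooth submanifold-with-boundary of $M'$, on which $g$ restricts to a uniformly PSC metric. Choose a smooth separating $3$-sphere $\Sigma \cong S^3$ inside the neck region created by the gluing; $\Sigma$ splits $M'$ into an $M$-side and an $R$-side. Discard the $M$-side and cap off the $R$-side across $\nu(\gamma_0)$ by attaching a $B^4$ carrying a standard hemispherical PSC metric, using Gromov-Lawson--Schoen-Yau PSC surgery to preserve the uniform scalar curvature lower bound during the gluing. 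This produces a complete, smooth, uniformly PSC metric on $R$, contradicting the choice of $R$.

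\textbf{Main obstacle.} The principal difficulty lies in the PSC cap-off. Applying Gromov-Lawson--Schoen-Yau surgery requires the separating sphere $\Sigma$ to sit in a collar with controlled local geometry (e.g.\ bounded second fundamental form), which a bare pointwise lower bound on scalar curvature does not automatically supply. The expected remedy is to exploit the freedom in the choice of $\Sigma$ within the end-sum neck, combined with a rescaling argument that reflects the nearly-cylindrical nature of the neck, in order to produce a $\Sigma$ with sufficient local regularity. Making this cap-off rigorous in the class of complete metrics with only a one-sided scalar curvature bound is the main technical point of the argument.
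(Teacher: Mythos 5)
Your construction step (end-summing $\operatorname{int}(W)$ with an exotic $\R^4$) is the same starting point as the paper, but the transfer-of-obstruction step has a genuine gap, in fact two. First, the ``neck'' of an end sum is not like a connected-sum neck: the two summands are glued along the boundaries of tubular neighborhoods of properly embedded rays, i.e.\ along a \emph{non-compact} $\R^3$ that runs out to infinity. Consequently there is no compact separating $3$-sphere $\Sigma$ in $M'$ splitting it into an ``$M$-side'' and an ``$R$-side'': any compact hypersurface misses all but a bounded portion of the gluing locus, and beyond it the two sides remain connected. So the proposed splitting does not exist, and even if one only restricts the metric to $R\setminus\nu(\gamma_0)$, that restriction is an incomplete metric on a manifold with non-compact boundary whose geometry near the boundary is completely uncontrolled (the metric $g$ on $M'$ is arbitrary), so the Gromov--Lawson-type cap-off you flag as the ``main obstacle'' is not a technical refinement away -- there is no neck, no compact sphere to cap, and no nearly-cylindrical structure to exploit. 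The property ``admits no complete uniformly PSC metric'' is not inherited by codimension-zero pieces in this naive way.

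Second, and more structurally: the actual content of \cref{thm:compact} is precisely to show that the obstruction carried by the exotic $\R^4$ \emph{survives} the end sum, and this cannot be done at the level of metrics. The paper does it at the level of the invariant: it takes Gadgil's exotic $\R^4$ with non-trivial end Floer homology (in the $\ul{HF}^-$ flavor, cf.\ \cref{HF-}), notes that the end sum produces an admissible exhaustion of $W'$ whose boundaries are connected sums $Y\#\partial R_i$, and then uses the K\"unneth formula for $\ul{HF}^-$ of connected sums together with the fact that the $Y\times[i,i+1]$ factor contributes the identity, to conclude that the end Floer homology of $W'$ is still non-trivial; \cref{thm:end-floer-obstruction} then rules out a complete metric with $R\geq 1$. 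Note also that the exotic $\R^4$'s of \cref{uncountable R^4} are obstructed via the decomposition $X\cup_Y W$ with an admissible Stein surface and a non-vanishing reduced contact invariant (\cref{non-trivial HE}); after end-summing with an arbitrary $\operatorname{int}(W)$ that structure is altered, so even choosing those particular $R$'s you would still need an argument (like the paper's Floer-theoretic one) that the obstruction persists. As written, your proposal replaces the key step with an argument that fails, so it does not establish the theorem.
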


Motivated by our results, it is natural to ask: 
\begin{question}
Is the standard $\R^4$ the unique contractible smooth $4$-manifold that admits a complete metric of uniformly positive scalar curvature? 
\end{question}
This question was previously raised in \cite{CWY10}. 
\begin{remark}
The methods here are insensitive to the scalar curvature of on a compact set (cf.\ Remark \ref{rema:cpt-set}). For example, Theorem \ref{thm:compact} shows that for some smooth structure on the once punctured $K3$ manifold does not admit a complete metric of uniformly positive scalar curvature. However, we have been unable to determine whether or not the standard smooth structure has such a metric. (The standard $K3$ manifold does not admit a metric of positive scalar curvature by spin-theoretic obstructions \cite{Lich63}.) As pointed out to us by Simone Cecchini, if $X^n$ is a closed smooth simply connected spin manifold of dimension $n\geq 6$ then the standard smooth structure on a punctured $X$ admits a complete metric of uniformly positive scalar curvature by \cite[Theorem 2.1(3)]{RW:double}.
\end{remark}

In a related direction, we can show that the end Floer homology as defined by Gadgil \cite{gadgil2009open} (see Section \ref{section:endFloer}) can obstruct the existence of complete metrics of uniformly positive scalar curvature. 

\begin{theorem}\label{thm:end-floer-obstruction}
Let $X$ be a non-compact smooth 4-manifold and $\mathfrak{s}$ be an asymptotic $spin^c$ structure, such that the end Floer homology $HE(X,\mathfrak{s})$ is non-trivial for at least one end, then $X$ does not admit a complete metric of scalar curvature $R\geq 1$.
\end{theorem}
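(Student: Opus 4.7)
The plan is to argue by contradiction using a Seiberg--Witten/Weitzenb\"ock vanishing along an exhaustion of the distinguished end. Assume for contradiction that $g$ is a complete metric on $X$ with $R_g \geq 1$, and fix an end $\varepsilon$ with $HE(X,\mathfrak{s})\neq 0$. Recall from Section~\ref{section:endFloer} that Gadgil's $HE(X,\mathfrak{s})$ is built from the monopole (or Heegaard) Floer homologies of a cofinal sequence of separating 3-manifolds $Y_1, Y_2,\ldots$ in $\varepsilon$, joined by cobordism maps induced by the intermediate regions $W_i$ carrying the restriction $\mathfrak{s}|_{W_i}$. Because $g$ is complete, the $Y_i$ can be produced as regular level sets of a smoothed exhaustion function (Sard), and each cobordism $W_i$ then inherits the pointwise bound $R \geq 1$.

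The core analytic input is the classical PSC vanishing for cobordism maps. For any (small, admissible) perturbation of the SW equations on $W_i$ with boundary in the $Y_i$, the Weitzenb\"ock identity
\[
D_A^* D_A\,\Phi = \nabla_A^* \nabla_A\,\Phi + \tfrac{R}{4}\Phi + \tfrac{1}{2} F_A^+\!\cdot \Phi
\]
combined with the curvature equation $F_A^+ = \sigma(\Phi) + \eta$ gives an $L^2$ estimate in which the uniform bound $R \geq 1$ dominates the perturbation terms, provided the perturbations are taken uniformly small along the end. Consequently, every finite-energy trajectory on $W_i$ is reducible, so the cobordism map $HM(Y_i,\mathfrak{s}) \to HM(Y_{i+1},\mathfrak{s})$ vanishes in the irreducible/reduced flavor of monopole Floer homology used to define $HE$ (or factors through the reducible part $\overline{HM}$, which is killed in the limit).

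Passing to the (co)limit, every transition map in the directed system computing $HE(X,\mathfrak{s})$ is zero, forcing $HE(X,\mathfrak{s}) = 0$ at the end $\varepsilon$, contradicting the hypothesis.

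The main obstacle is matching the Weitzenb\"ock vanishing to the precise algebraic flavor of Floer theory that appears in Gadgil's definition. Concretely one must: (i) choose a cofinal family of admissible perturbations along the end that is uniformly small, so the $R/4$ term continues to dominate the perturbation terms after integration; (ii) verify that the resulting vanishing applies to all the relevant cobordism maps, not just to moduli spaces in a fixed grading, by invoking the structural results of Kronheimer--Mrowka together with arguments in the spirit of the Lin--Ruberman--Saveliev treatment of PSC cobordism maps; and (iii) isolate and discard the reducible contributions, either by working in a completion where they are killed in the limit or by perturbing into a transverse chamber. The uniform lower bound $R\geq 1$ (as opposed to mere positivity) is essential, since the non-compact geometry of the end otherwise allows the scalar curvature to decay and the Weitzenb\"ock estimate to fail.
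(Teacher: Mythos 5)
There is a genuine gap at the heart of your argument: the claim that $R\geq 1$ on each compact cobordism piece $W_i$ forces the induced Floer cobordism map to vanish. That is false without control on the separating $3$-manifolds themselves. A compact $4$-manifold with non-empty boundary always admits a metric of uniformly positive scalar curvature (no boundary conditions are imposed; cf.\ the remark in the introduction that every non-compact $4$-manifold admits a possibly incomplete metric with $R\geq 1$), so in particular a product cobordism $Y\times[0,1]$ carries a metric with $R\geq 1$ even when $Y$ admits no PSC metric; yet its cobordism map is the identity, which is non-zero whenever the reduced Floer homology of $Y$ is non-trivial. The Weitzenb\"ock vanishing you invoke is only available on the cylindrical-end completion $Y_i\times(-\infty,0]\cup W_i\cup Y_{i+1}\times[0,\infty)$, and to run it you need positive scalar curvature on the ends as well, i.e.\ you need the $Y_i$ themselves to admit PSC metrics. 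Your $Y_i$ are regular level sets of a smoothed exhaustion function, and the induced scalar curvature of such a hypersurface is in no way controlled by the ambient bound $R\geq 1$, so this hypothesis is simply not available at that point of the argument. (Your worry about perturbations and reducibles is secondary; the argument already fails at this step.)

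This missing geometric input is exactly what the paper supplies with Gromov's $\mu$-bubble technique: by Proposition \ref{prop:gromov-mu-bubble-exhaustion} (via the stability inequality for $\mu$-bubbles, Proposition \ref{prop:bandestimate}), completeness together with $R\geq 1$ produces an exhaustion $\{\Omega_i\}$ whose boundaries $\partial\Omega_i$ \emph{themselves} admit PSC metrics, hence by Proposition \ref{prop:dominate3d} are connected sums of spherical space forms and $S^2\times S^1$'s and therefore $L$-spaces. The paper then concludes purely Floer-theoretically (Theorem \ref{thm: vanishing HE}): each transition map in the admissible exhaustion defining $\ul{HE}(X,\mathfrak{s};\omega)$ factors through the vanishing reduced Floer homology of one of these separating $L$-spaces, so the direct limit is zero---no $4$-dimensional gauge-theoretic estimate on the cobordisms is needed. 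If you want to salvage your route, you would have to replace the level-set hypersurfaces by the $\mu$-bubble hypersurfaces (or otherwise produce PSC separating $3$-manifolds), at which point the factoring argument already closes the proof and the Weitzenb\"ock analysis becomes unnecessary.
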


We are also able to obstruct the existence of complete metrics with uniformly positive scalar curvature by utilizing contact and symplectic geometry. For more details, refer to the technical result \cref{non-trivial HE}.

\subsection{On \texorpdfstring{$R>0$}{R>0}} We note that the condition that $M$ admits a complete metric of positive scalar curvature $R>0$ (without the strict positive assumption) is much less restrictive. For example, the product of $S^1$ with a positively curved metric on $\R^2$ (e.g. a paraboloid) yields such a metric on the genus-one handlebody (which does not admit a complete metric of uniformly positive curvature by e.g.\ combining \cref{coro:boundary-tame} with \cref{prop:dominate2d} below). 

We note that even in $3$-dimensions, the classification of manifolds that admit a complete metric of positive curvature is a well-known open problem (cf.\ \cite[\# 27]{Yau:problem}). Some progress in the contractible case has been achieved (e.g.\ genus one contractible $3$-manifolds such as the Whitehead manifold does not admit such a metric \cite{Wang,Wang2}) but it's still unknown if $\R^3$ is the unique contractible $3$-manifold admitting such a metric. It's also unknown whether or not the genus-two (or higher) handlebody admits such a metric. We additionally refer to \cite{ChCh24,zhu2021,Chen:SYS,CRZ23,chen2024positive,lott2024obstructions,Zhu:calabi} for other results in this direction.

\subsection{Outline of techniques} 
The main idea of this article is to combine Gromov's $\mu$-bubble technique (cf.\ Section \ref{sec:mu-bubble}) with techniques from  contact and symplectic geometry as well as Floer theory. The $\mu$-bubble technique generalizes and localizes the Schoen--Yau stable minimal hypersurface method, and is particularly effective in the setting of non-compact manifolds. The Schoen--Yau minimal surface method was previously combined with some gauge theoretic methods in \cite{Lin,LRN,KM, KTJtop} to study whether or not certain (rational) homology $S^1\times S^3$ admit metrics of positive scalar curvature. 

The proofs of Theorems \ref{theo:Mazur} and \ref{thm:b4} can be roughly described as follows. The $\mu$-bubble technique in conjunction with the classification of positive scalar curvature $3$-manifolds places strong constraints (cf.\ \cref{coro:boundary-tame}) on the topology of $\partial W$ if $W$ is a smooth compact $4$-manifold whose interior admits a complete Riemannian metric of uniformly positive scalar curvature. On the other hand, if $W$ is assumed to be contractible then $\partial W$ must also be an integral homology sphere. These facts can be combined (cf.\ \cref{coro:poinc-PSC}) to prove that $\partial W$ is diffeomorphic to either $S^3$ or 
 $(\#_{\ell=1}^L P) \# (\#_{m=1}^M -P)$ for $P$ the Poincar\'e homology sphere (and $-P$ an oppositely oriented copy).  At this point, the assertions follow from deep results in topology. 
 
 In the proofs of Theorems \ref{uncountable R^4}, \ref{thm:compact}, and \ref{thm:end-floer-obstruction} we demonstrated how contact and symplectic geometry, along with Floer theory, come together to obstruct the existence of a special type of exhaustion. This, in turn, obstructs the existence of metrics with uniformly positive scalar curvature.

\subsection{Organization} 
In Section \ref{sec:overview-psc} we recall several classification results concerning positive scalar curvature that we will use later. Section \ref{sec:mu-bubble} contains a discussion of the $\mu$-bubble method. The end Floer homology obstruction is discussed in Section \ref{section:endFloer}. Then in Section \ref{sec:exotic} we construct various examples of exotic smooth structures that do not admit complete metrics with uniformly positive scalar curvature. Finally, in Appendix \ref{app:survey} we survey some general classification results of positive scalar curvature and in Appendix \ref{perturbedHF} we discuss perturbed Heegaard Floer homology. 

\subsection{Acknowledgements} We are grateful to Ian Agol, Simone Cecchini, Misha Gromov, Bob Gompf, Hokuto Konno, Chao Li, Ciprian Manolescu, Daniel Ruberman, Masaki Taniguchi, Clifford Taubes, Shmuel Weinberger, and Ian Zemke for helpful conversations related to this work. We are especially grateful to Bob Gompf for discussing his previous work and for detailed comments on a previous draft.  O.C. was supported by Terman Fellowship and NSF grant DMS-2304432. D.M. was partially supported by NSF grant DMS-191049. A.M. was supported in part by NSF grant DMS-2405270.

\section{Overview of PSC classification results}\label{sec:overview-psc}
In this section we review some results concerning the topology of (closed) manifolds admitting metrics with positive scalar curvature (PSC). A classification of smooth closed (compact, no boundary) oriented Riemannian manifolds $(M^n,g)$ with positive scalar curvature is available when $n=2,3$: 
\begin{itemize}
\item When $n=2$, $M$ is diffeomorphic to $S^2$. 
\item When $n=3$, $M$ is diffeomorphc to $(\#_{j=1}^J S^3/\Gamma_j) \# (\#_{k=1}^K S^2 \times S^1)$, where $\Gamma_j$ is a fixed-point free finite subgroup of $SO(4)$  \cite{perelman} (see also \cite{GrLa83,ScYa79b,ScYa79a}). 
\end{itemize}
In fact, we will need the following (well-known) mapping version of these results. It would be possible to state these results in the non-orientable case as well (with appropriate modifications) but we will not need this in the sequel. 

\begin{proposition}\label{prop:dominate2d}
Suppose that $X^2$ is a closed connected oriented surface and $(M^2,g)$ is a closed oriented surface with a Riemannian metric of positive scalar curvature. If there is a map of non-zero degree $M\to X$ then $X$ is diffeomorphic to $S^2$. 
\end{proposition}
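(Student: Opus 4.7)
The plan is to reduce to the case $M = S^2$ using Gauss--Bonnet, and then use the fact that a non-zero degree map from a sphere forces the target to be a sphere.

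First, I would observe that in dimension two, the scalar curvature equals twice the Gaussian curvature $K$, so $g$ has $K > 0$ pointwise. Integrating over $M$ and applying the Gauss--Bonnet theorem,
\[
2\pi \chi(M) = \int_M K \, \mathrm{dvol}_g > 0,
\]
so $\chi(M) > 0$. Since $M$ is a closed connected oriented surface, this forces $M \cong S^2$ via the classification of such surfaces.

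Next I would exploit the degree hypothesis. Given a map $f \colon M \to X$ of non-zero degree between closed oriented $n$-manifolds, Poincaré duality implies that $f^* \colon H^*(X;\Q) \to H^*(M;\Q)$ is injective: any class $\alpha \in H^k(X;\Q)$ pairs non-trivially with some $\beta \in H^{n-k}(X;\Q)$ via the fundamental class of $X$, and then $f^*\alpha \smile f^*\beta$ evaluates to $(\deg f) \langle \alpha \smile \beta, [X]\rangle \neq 0$ on $[M]$. Taking $n=2$, $k=1$, and applying this to $f \colon S^2 \to X$ gives $H^1(X;\Q) \hookrightarrow H^1(S^2;\Q) = 0$, hence $X$ has genus zero and is diffeomorphic to $S^2$.

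There is no serious obstacle here; the only mild subtlety is making sure one invokes the correct (oriented, closed) version of the Poincaré duality argument for non-zero degree maps. An alternative finish, in case one prefers a more geometric flavour, is to note that the image of $\pi_1(M) \to \pi_1(X)$ has finite index (a standard consequence of $\deg f \neq 0$); since $M = S^2$ is simply connected, $\pi_1(X)$ must be finite, and among closed oriented surfaces this singles out $S^2$.
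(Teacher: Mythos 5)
Your proof is correct and follows essentially the same route as the paper: Gauss--Bonnet forces $\chi(M)>0$, hence $M\cong S^2$, and then a non-zero degree map $S^2\to X$ forces $X\cong S^2$. The only (harmless) difference is in the last step, where the paper notes that any other closed oriented surface is aspherical, so $\pi_2(X)=0$ and the map would be null-homologous, while you instead use injectivity of $f^*$ on rational cohomology via Poincar\'e duality (or, in your alternative, finiteness of the index of the image of $\pi_1$); all of these are valid.
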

\begin{proof}
Since $R=2K$ in $2$-dimensions, where $K$ is the Gaussian curvature, then the Gauss--Bonnet theorem implies that   $\chi(M) > 0$ and thus $M$ is diffeomorphic to $S^2$. The only $X$ with $S^2 \to X$ non-zero degree is $S^2$ (any other $X$ has $\pi_2(X) = 0$ by considering the universal cover, so the map would have to be null-homologous). 
\end{proof}

\begin{proposition}\label{prop:dominate3d}
Suppose that $X^3$ is a closed connected oriented $3$-manifold and $(M^3,g)$ is a closed oriented 3-manifold with a Riemannian metric of positive scalar curvature. If there is a map of non-zero degree $M\to X$ then $X$ is diffeomorphic to $S^3$ or else a connected sum of the form $(\#_{j=1}^J S^3/\Gamma_j) \# (\#_{k=1}^K S^2 \times S^1)$.
\end{proposition}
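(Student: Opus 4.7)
My plan is to argue by contradiction via the prime decomposition of $X$ and a classifying-space computation. Suppose $X$ is not of the stated form. Combining Perelman's geometrization theorem with the sphere theorem (and the fact that a simply-connected non-compact $3$-manifold with $\pi_2=0$ is contractible), every prime summand of $X$ is diffeomorphic to one of $S^3/\Gamma$, $S^2\times S^1$, or a closed aspherical $3$-manifold. Hence some prime factor $X_0$ of $X$ is closed, oriented, and aspherical. Pinching the other prime summands to points gives a degree-one map $X\to X_0$; composing with $f$ yields a non-zero degree map $g\colon M\to X_0$.

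Next I would exploit the asphericity of $X_0\simeq K(\pi_1(X_0),1)$: any map $M\to X_0$ is homotopic to the composition
\[
M\xrightarrow{c} K(\pi_1(M),1)\xrightarrow{h} X_0,
\]
where $c$ is the classifying map for $\pi_1(M)$ and $h$ is induced on classifying spaces by $g_*\colon\pi_1(M)\to\pi_1(X_0)$. Hence $g_*[M]=h_*c_*[M]\in H_3(X_0;\Z)=\Z$, and it suffices to show that $c_*[M]$ is a torsion class.

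Since $M$ admits a PSC metric, Perelman's classification (restated above) gives $M\cong(\#_{j=1}^{J} S^3/\Gamma_j)\#(\#_{k=1}^{K} S^2\times S^1)$, so by van Kampen $\pi_1(M)\cong \Gamma_1*\cdots*\Gamma_J*F_K$. The standard wedge model for classifying spaces of free products (valid because the wedge of aspherical CW complexes has a contractible universal cover, being a tree of the contractible universal covers of the factors) gives
\[
K(\pi_1(M),1)\simeq\bigvee_{j=1}^{J} K(\Gamma_j,1)\;\vee\;\bigvee_{k=1}^{K} S^1,
\]
whose reduced $H_3$ with $\Z$-coefficients is $\bigoplus_j H_3(\Gamma_j;\Z)$. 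Each summand $H_3(\Gamma_j;\Z)$ is annihilated by $|\Gamma_j|$ via the transfer argument, so the whole group is torsion; consequently $h_*c_*[M]$ is a torsion element of $\Z$, hence zero, forcing $\deg(g)=0$ and giving the desired contradiction. (The case $X=S^3$ is handled by the empty connected sum.)

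The main obstacle is assembling the topological input---Perelman's trichotomy for prime orientable $3$-manifolds (so that the only way for $X$ to fail the stated form is via an aspherical prime summand) and the wedge decomposition of $K(\pi,1)$ for a free-product fundamental group. Once these are granted, the proof reduces to the short homological computation above.
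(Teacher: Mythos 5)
Your argument is correct, and its first step---using the Kneser--Milnor prime decomposition together with Perelman's work to reduce to a non-zero degree map onto a closed oriented aspherical summand $X_0$---is exactly the reduction made in the paper's proof. Where you diverge is in how that aspherical case is ruled out. You invoke the classification of closed oriented PSC $3$-manifolds for $M$ itself, so that $\pi_1(M)\cong \Gamma_1\ast\cdots\ast\Gamma_J\ast F_K$, and then run a classifying-space computation: the map factors through $K(\pi_1(M),1)\simeq \bigvee_j K(\Gamma_j,1)\vee\bigvee_k S^1$, whose $H_3$ is torsion, so the image of the fundamental class in $H_3(X_0;\Z)\cong\Z$ vanishes and the degree is zero. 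This is essentially the ``standard'' route that the paper attributes to \cite{GrLa83,ScYa79a,perelman} as explained in \cite{AgolOverFlow}, written out in full, and it is complete as stated. The proof actually sketched in the paper is deliberately different: it never uses the classification of $M$, only that $M$ carries positive scalar curvature. One lifts to a proper non-zero degree map $\hat M\to\tilde K$ into the contractible universal cover of the aspherical target, places an embedded $\R$ and an $S^1$ in $\tilde K$ linked at distance $L\gg 1$, fills the preimage of the $S^1$ by a stable minimal disk with in-radius controlled by the Schoen--Yau diameter estimate \cite{SY:diameter}, and pushes the filling back to $\tilde K$ to contradict $L\gg1$. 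Your route buys a short, purely homological argument, but it is tied to dimension $3$ where the PSC classification is available; the paper's geometric argument trades that input for the minimal-surface/filling-radius technique, which generalizes to settings without a classification (cf.\ \cite{Chodosh21,ChCh24}) and matches the $\mu$-bubble methods used throughout the article.
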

\begin{proof}
This follows from \cite{GrLa83,ScYa79a,perelman} as explained in \cite{AgolOverFlow}. We sketch an alternative approach. By Perelman's proof of the Poincar\'e conjecture and the Kneser--Milnor prime decomposition, if $X$ is not of the asserted form, then $X = X' \# K$ with $K$ a closed aspherical (having contractible universal cover) $3$-manifold. As such, by composing with the map crushing $X'$ to a point, it suffices to rule out the case that that $M$ admits a non-zero degree map $f: M \to K$ for $K$ a closed oriented aspherical $3$-manifold. 

This contradicts known results about positive scalar curvature and aspherical manifolds as described in \cite[Proposition 7.23]{Chodosh21} (cf.\ \cite{ChCh24}). We sketch the proof for completeness. One may lift to $\tilde f : \hat M \to \tilde K$ proper of non-zero degree, where $\tilde K$ is the universal cover and $\hat M$ is an appropriate cover (cf.\ \cite[Lemma 18]{CCL23} or \cite[Lemma 7.22]{Chodosh21}). We can then find linked embedding of $\R$ and $S^1$ into $\tilde K$ at a distance $L\gg 1$ apart (this uses $\tilde K$ contractible). On the other hand, since $(\hat M,\hat g)$ has uniformly positive scalar curvature (being a cover of a closed Riemannian manifold of positive scalar curvature) the preimage of the $S^1$ (perturbing $f$ slightly) can be filled by a minimal disk of bounded in-radius by diameter bounds for stable minimal hypersurfaces in positive scalar curvature \cite{SY:diameter}. After pushing this filling disk back to $\tilde K$, any point in the intersection with the linked $\R$ must have bounded distance to the $S^1$. This contradicts the fact that we can take the distance $L\gg1$ arbitrarily large. 
\end{proof}

The following corollary will be used in the sequel. Although it follows from known results, we could not find this exact statement in the literature and expect it to be useful in other settings. We begin by recalling the binary icosahedral group $I^* = \langle s,t | (st)^2 = s^3 = t^5 \rangle$. Letting $s,t$ act on $\R^4$ as quaternions $s=\frac 12 (1+i+j+k),t=\frac 12(\varphi+\varphi^{-1}i+j)$ (where $\varphi = \frac{\sqrt{5}-1}{2}$) one may check that $I^*\subset SO(4)$ acts on $S^3$ without fixed points. Thus $P=S^3/I^*$ is a spherical spaceform, so admits a metric of constant positive sectional (and thus scalar curvature). Writing $-P$ for $P$ with the opposite orientation, clearly $-P$ admits positive scalar curvature as well. Since positive scalar curvarture is preserved under connected sum (in dimensions $\geq 3$) we see that $(\#_{\ell=1}^L P) \# (\#_{m=1}^M -P)$ admits a metric of positive scalar curvature for any $L,M\geq 0$. On the other hand, it's well known that $H_*(P;\Z) = H_*(S^3;\Z)$, i.e.\ $P$ is an oriented integer homology sphere (this follows from the fact that $I^*$ is a perfect group, combined with Poincar\'e duality). 

Since the connected sum of integral homology spheres is again an integral homology sphere (this follows from Mayer--Vietoris) we conclude that $(\#_{\ell=1}^L P) \# (\#_{m=1}^M -P)$ is an integral homology sphere that admits a metric of positive scalar curvature.

\begin{corollary}\label{coro:poinc-PSC}
Suppose that $(M^3,g)$ is a closed oriented $3$-manifold with a Riemannian metric of positive scalar curvature. Suppose also that $M$ is an integral homology sphere, $H_*(M;\Z) = H_*(S^3;\Z)$. Then $M$ is diffeomorphic to either $S^3$ or the connected sum of Poincar\'e homology spheres $(\#_{\ell=1}^L P) \# (\#_{m=1}^M -P)$. 
\end{corollary}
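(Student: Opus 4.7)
The plan is to combine the classification of closed PSC $3$-manifolds with the algebraic constraint coming from $M$ being an integer homology sphere. First I would apply \cref{prop:dominate3d} to the identity map $M\to M$ (equivalently, invoke the classification of Perelman/Gromov--Lawson/Schoen--Yau directly): since $(M,g)$ has positive scalar curvature, either $M$ is diffeomorphic to $S^3$ (in which case we are done), or
\[
M \cong \bigl(\#_{j=1}^{J} S^3/\Gamma_j\bigr) \# \bigl(\#_{k=1}^{K} S^2\times S^1\bigr)
\]
for some finite subgroups $\Gamma_j\subset SO(4)$ acting freely on $S^3$.

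Next I would impose $H_1(M;\Z)=0$. By the Mayer--Vietoris computation for connected sums,
\[
H_1(M;\Z) \;\cong\; \bigoplus_{j=1}^{J} H_1(S^3/\Gamma_j;\Z) \;\oplus\; \Z^{K}.
\]
Vanishing forces $K=0$ and $H_1(S^3/\Gamma_j;\Z)=0$ for every $j$. Since $\pi_1(S^3/\Gamma_j)=\Gamma_j$ and $H_1$ is the abelianization, each $\Gamma_j$ must be a \emph{perfect} finite group admitting a free orthogonal action on $S^3$.

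The remaining — and main — step is the purely group-theoretic input: the only nontrivial perfect finite subgroup of $SO(4)$ acting freely on $S^3$ is the binary icosahedral group $I^*$. I would cite the classical Hopf/Threlfall--Seifert classification of spherical space forms: the possible $\Gamma$ are cyclic groups, binary dihedral, binary tetrahedral, binary octahedral, and binary icosahedral groups (together with certain extensions by cyclic factors), and by inspection of their abelianizations only $I^*$ is perfect. Consequently each $S^3/\Gamma_j$ is diffeomorphic to the Poincar\'e homology sphere $P$, possibly with either orientation. Regrouping the summands according to orientation yields
\[
M \;\cong\; \bigl(\#_{\ell=1}^{L} P\bigr) \# \bigl(\#_{m=1}^{M} -P\bigr),
\]
which completes the proof.

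I expect the delicate point to be the appeal to the classification of free finite group actions on $S^3$ and verifying that $I^*$ is the unique perfect one; everything else is a short algebraic-topology computation once the PSC classification of closed $3$-manifolds is in hand.
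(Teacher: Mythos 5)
Your proposal is correct and follows essentially the same route as the paper: invoke the PSC classification (\cref{prop:dominate3d}) to get the connected-sum decomposition, use Mayer--Vietoris with $H_1(M;\Z)=0$ to kill the $S^2\times S^1$ summands and force each $\Gamma_j$ to be perfect, and then appeal to the classification of fixed-point free finite subgroups of $SO(4)$ (the paper cites \cite[Theorem 2]{Kervaire}, you cite the equivalent Hopf/Threlfall--Seifert space-form classification) to conclude that only $I^*$ survives, so each summand is $\pm P$.
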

\begin{proof}
By the classification of PSC $3$-manifolds (see Proposition \ref{prop:dominate3d}) we conclude that $M$ is diffeomorphic to $S^3$ or 
\[
(\#_{j=1}^J S^3/\Gamma_j) \# (\#_{k=1}^K S^2 \times S^1).
\]
Since $M$ is oriented, we can use Mayer--Vietoris to conclude that
\[
H_i((\#_{j=1}^J S^3/\Gamma_j) \# (\#_{k=1}^K S^2 \times S^1);\Z) = (\oplus_{j=1}^j H_i(S^3/\Gamma_j);\Z) \oplus (\oplus_{k=1}^K H_i(S^2 \times S^1;\Z)
\]
for $i=1,2$. Since $H_1(S^2\times S^1;\Z) = \Z$ we see that $K=0$. Finally,  from the classification of fixed-point free finite subgroups of $SO(4)$ one may check that the only one with trivial abelianization is $I^*$ (and the trivial group). See \cite[Theorem 2]{Kervaire}. This proves the assertion. 
\end{proof}

In dimensions $\geq 4$, a complete classification of closed manifolds admitting positive scalar curvature is not known. Several partial classification results have been obtained. We survey some of the techniques and results in \cref{app:survey}.

We briefly discuss the following result recently obtained by  R\"ade \cite[Proposition 2.17]{Rade23} (for $n\geq 6$) using surgery theory (see also \cite{GL80,Stolz,CRZ23}) since it will be referenced in the sequel:
\begin{proposition}\label{prop:Rade}
For $n\in \{3,4\} \cup\{6,7,\dots\}$, suppose that $Y$ is a $(n-1)$-dimensional closed connected oriented smooth manifold and $\Sigma \subset Y\times \R$ is a closed embedded separating hypersurface. If $\Sigma$ admits positive scalar curvature, then so does $Y$. 
\end{proposition}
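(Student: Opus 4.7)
The plan is to extract a degree-$\pm 1$ map $\pi|_\Sigma\colon \Sigma \to Y$ from the separating hypothesis and then invoke the domination results \cref{prop:dominate2d} and \cref{prop:dominate3d} when $n \in \{3,4\}$, and surgery theory when $n \geq 6$. For the degree computation, orient $Y \times \R$ by the product orientation, let $U$ be the component of $(Y \times \R) \setminus \Sigma$ containing the upper end $t \to +\infty$, and orient $\Sigma = \partial U$ accordingly. At a regular value $y \in Y$ of $\pi|_\Sigma$, the fiber $\pi|_\Sigma^{-1}(y)$ is a finite subset of the vertical line $\{y\} \times \R$, which starts in the complement of $U$ at $t \to -\infty$ and ends in $U$ at $t \to +\infty$. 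Its signed intersection number with $\Sigma$ therefore equals $\pm 1$, so $\deg(\pi|_\Sigma) = \pm 1$, and some connected component $\Sigma_0 \subseteq \Sigma$ projects to $Y$ with nonzero degree; since $\Sigma$ admits PSC, so does $\Sigma_0$.

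When $n \in \{3,4\}$, the nonzero-degree map $\Sigma_0 \to Y$ triggers \cref{prop:dominate2d} (for $n = 3$) or \cref{prop:dominate3d} (for $n = 4$), and we conclude that $Y$ is diffeomorphic to $S^{n-1}$ or to one of the explicit connected sums of spherical space forms and copies of $S^2 \times S^1$, each of which is known to admit a PSC metric.

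When $n \geq 6$, I would follow the surgery-theoretic strategy of R\"ade \cite[Proposition 2.17]{Rade23}. Pick $T$ large enough that $\Sigma \subset Y \times (-T, T)$, and let $W_0 := U \cap (Y \times [-T, T])$; this is a compact $n$-dimensional cobordism with $\partial W_0 = \Sigma \sqcup (Y \times \{T\})$ from $\Sigma$ to a parallel copy of $Y$. A Morse function on $W_0$ presents $Y$ as obtained from $\Sigma$ by a sequence of surgeries, and the Gromov--Lawson surgery theorem propagates PSC through any surgery whose attaching sphere has codimension $\geq 3$ in the PSC slice. The hard part, and the main obstacle, is to re-order and cancel handles of $W_0$ so that every surgery meets this codimension constraint: this is feasible via the Whitney trick exactly when $n \geq 6$, and fails in dimension $5$, which is precisely why the proposition omits that case.
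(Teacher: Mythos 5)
Your proposal is correct and follows essentially the same route as the paper: the separating hypothesis gives the projection $\Sigma\to Y$ degree $\pm 1$ (your intersection-number argument just makes explicit what the paper asserts in one line), the cases $n=3,4$ then reduce to \cref{prop:dominate2d} and \cref{prop:dominate3d}, and the case $n\geq 6$ is, exactly as in the paper, deferred to the surgery-theoretic result of R\"ade \cite{Rade23}.
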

The projection map $Y\times \R \to Y$ restricts to $\Sigma$ to yield a degree $1$ map $\Sigma\to Y$. Thus, Proposition \ref{prop:Rade} is a consequence of Propositions \ref{prop:dominate2d} and \ref{prop:dominate3d} when $n=3,4$. For the results in this article concerned with $4$-manifolds, we'll only need this version of Proposition \ref{prop:Rade} (as opposed to the high dimensional surgery result from \cite{Rade23}).   

\begin{remark} \label{rema:counterex-S1} We emphasize that there is a counterexample to Proposition \ref{prop:Rade} with $n=5$, see \cite[Remark 1.25]{Rosen06}. 
\end{remark}
\section{Exhaustions via \texorpdfstring{$\mu$}{mu}-bubbles}\label{sec:mu-bubble}

The following result is due to Gromov. After describing some consequences, we will explain the proof for completeness. 

\begin{proposition}[{\cite[\S 3.7.2]{Gromov23}}]\label{prop:gromov-mu-bubble-exhaustion}
For $3 \leq n \leq 7$ suppose that $(M^n,g)$ is a complete Riemannian manifold with scalar curvature $R\geq 1$. Then there's an exhaustion $\Omega_1\subset \Omega_2\subset \Omega_3\dots$ with $M = \cup_{i=1}^\infty\Omega_i$ where the $\Omega_i$'s are compact codimesnion-0 smooth submanifolds with smooth boundaries $\partial\Omega_i$ such that the $(n-1)$-manifolds $\partial\Omega_i$ admit metrics of positive scalar curvature. 
\end{proposition}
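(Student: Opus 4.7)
The plan is to iterate Gromov's $\mu$-bubble construction on a sequence of annular shells around a fixed basepoint. Fix $p_0 \in M$, let $\rho(x) = d(x,p_0)$ (smoothed to a proper function if necessary), and pick a length scale $L > 0$ depending only on $n$ and the lower scalar curvature bound. Choose radii $R_1 < R_2 < \cdots$ with $R_{i+1} > R_i + L$ and $R_i \to \infty$, and let $A_i := \{R_i < \rho < R_i + L\}$ be the associated annular shells. In each $A_i$, the goal is to construct a smooth compact codimension-$0$ submanifold $\Omega_i$ with $\{\rho \leq R_i\} \subset \Omega_i \subset \{\rho \leq R_i + L\}$ whose boundary $\Sigma_i := \partial \Omega_i$ admits a metric of positive scalar curvature. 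Nestedness $\Omega_i \subset \Omega_{i+1}$ is then automatic from the separation of the shells, and $\bigcup_i \Omega_i = M$ follows from $R_i \to \infty$ together with completeness of $(M,g)$.

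Inside each $A_i$, introduce a warping function $h_i$ depending monotonically on $\rho$, with $h_i \to +\infty$ as $\rho \searrow R_i$ and $h_i \to -\infty$ as $\rho \nearrow R_i + L$; the canonical choice is $h_i(x) = -c \tan(\pi(\rho(x) - R_i - L/2)/L)$ for a suitable constant $c$. Minimize the $\mu$-bubble functional
\[
\mathcal{A}_i(\Omega) = \mathcal{H}^{n-1}(\partial^* \Omega) - \int_\Omega h_i \, d\vol_g
\]
over Caccioppoli sets $\Omega$ with the boundary condition that $\Omega \triangle \{\rho \leq R_i\}$ is compactly contained in $A_i$. The blow-up of $h_i$ at the two boundary components of $A_i$ forces the reduced boundary of any minimizer into the interior of $A_i$ by a barrier argument. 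Existence of a minimizer $\Omega_i$ follows by the direct method, and smoothness of $\Sigma_i = \partial \Omega_i$ from the regularity theory for stable almost-minimizing integer currents, valid precisely in the dimensional range $n \leq 7$.

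The $\mu$-bubble $\Sigma_i$ has prescribed mean curvature $H_{\Sigma_i} = h_i|_{\Sigma_i}$ and is stable, giving the second variation inequality
\[
\int_{\Sigma_i} \bigl(|\nabla \phi|^2 - (|A|^2 + \operatorname{Ric}(\nu, \nu) + \partial_\nu h_i) \phi^2 \bigr) \geq 0 \quad \text{for all } \phi \in C^\infty(\Sigma_i).
\]
Substituting the Gauss equation $R_M = R_{\Sigma_i} + 2\operatorname{Ric}(\nu, \nu) + h_i^2 - |A|^2$ and applying $|A|^2 \geq h_i^2/(n-1)$ yields, after rearrangement, an inequality of the form
\[
\int_{\Sigma_i} \bigl( |\nabla \phi|^2 + \tfrac{1}{2} R_{\Sigma_i} \phi^2 \bigr) \geq \int_{\Sigma_i} W \phi^2,
\]
with weight $W = \tfrac{1}{2} R_M + \tfrac{4-n}{2(n-1)} h_i^2 + \partial_\nu h_i$. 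Calibrating the parameters $c$ and $L$ so that $W \geq \varepsilon_0 > 0$ holds at the bubble (despite the potentially large values of $h_i$ and $|\nabla h_i|$ in $A_i$) is the heart of the argument; one then runs the standard Schoen--Yau conformal rescaling with the first eigenfunction of the conformal Laplacian on $\Sigma_i$ to produce a metric of positive scalar curvature on $\Sigma_i$, with Gauss--Bonnet replacing the conformal rescaling in the borderline case $n = 3$. The main obstacle is exactly this calibration: $L$ must be chosen large enough (of order comparable to $(\inf R_M)^{-1/2}$ times a dimensional constant) so that the pointwise warping inequality controlling $W$ actually holds along the bubble, and this is the only place where the lower bound $R_M \geq 1$ and the dimensional restriction $n \leq 7$ simultaneously enter.
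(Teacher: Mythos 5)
Your overall strategy is the same as the paper's: minimize Gromov's $\mu$-bubble functional in bands with the warped profile $h \sim -c\tan(\cdot)$, use stability plus the Schoen--Yau conformal trick (Gauss--Bonnet for $n=3$) to get PSC on the bubble, and iterate over bands to build the exhaustion. The only structural difference is that you take concentric distance shells around a fixed basepoint, while the paper inducts, using $\partial\Omega_{i-1}$ as the inner boundary of the next band; both give nestedness and exhaust $M$, so this is cosmetic.

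There is, however, a concrete error in your stability computation that, as written, breaks the ``heart of the argument.'' The traced Gauss equation is $R_M = R_{\Sigma} + 2\operatorname{Ric}(\nu,\nu) + |A|^2 - H^2$; you have swapped the roles of $|A|^2$ and $H^2=h_i^2$. Done correctly, substituting into the stability inequality and using $|A|^2 \geq h_i^2/(n-1)$ (or even just $|A|^2\geq 0$) gives a weight of the form $W = \tfrac12 R_M + \alpha\, h_i^2 + \partial_\nu h_i$ with $\alpha = \tfrac{n}{2(n-1)}>0$ (resp.\ $\alpha=\tfrac12$), and then the $\tan$/$\sec$ cancellation $\alpha c^2\tan^2 - c\tfrac{\pi}{L}\sec^2 \geq -\alpha c^2$ (choosing $c\sim \pi/(\alpha L)$, $\operatorname{Lip}\rho\leq 1$) yields $W \geq \tfrac12 - C/L^2 > 0$ for $L$ large depending only on the scalar curvature bound --- exactly the paper's estimate $h^2 + 2\langle\nabla h,\nu\rangle \geq -\pi^2/D^2$. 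With your coefficient $\tfrac{4-n}{2(n-1)}$, which is zero for $n=4$ and negative for $n=5,6,7$, no choice of $c$ and $L$ makes $W$ uniformly positive: near the ends of the shell $\partial_\nu h_i$ is of order $-c\tfrac{\pi}{L}\sec^2$ and there is no positive multiple of $h_i^2=c^2\tan^2$ left to absorb it, so the calibration you describe fails there. Once the sign is fixed the argument goes through verbatim. Two smaller points: the dimension restriction $n\leq 7$ enters only through regularity of the almost-minimizer (as you say earlier), not through the calibration; and the functional should be written relative to a reference set, e.g.\ $|\partial^*\Omega| - \int (\chi_\Omega - \chi_{\{\rho\leq R_i\}})h_i$, since $\int_\Omega h_i$ need not converge with $h_i$ blowing up at the inner wall.
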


\begin{remark}\label{rema:cpt-set}
As will be clear from the proof, the same fact would hold with the weaker requirement that $R\geq 1$ outside of a compact set. 
\end{remark}

The dimensional restriction $n\leq 7$ is due to the potential presence of singularities in area-minimizing hypersurfaces in $8$-dimensional (and higher) manifolds. One may hope that with technical improvements in the study of generic regularity of (generalized) area minimizing hypersurfaces, one might be able to remove this condition (cf.\ \cite{HardtSimon,chodosh2023generic}). 

\begin{remark}\label{rema:CWY-result}
We note that Proposition \ref{prop:gromov-mu-bubble-exhaustion} (and the fact that $S^2$ is the only oriented closed $2$-manifold admitting positive scalar curvature) implies that if $(M^3,g)$ is a complete oriented Riemannian manifold with uniformly positive scalar curvature, then it admits an exhaustion by $\Omega_i$ with $\partial \Omega_i$ the disjoint union of $S^2$'s. For example, this implies that $\R^3$ is the only contractible $3$-manifold that admits such a metric, as originally proven in \cite{CWY10}. 
\end{remark}

We have the following important consequences of Proposition \ref{prop:gromov-mu-bubble-exhaustion}. 
\begin{corollary}\label{coro:boundary-tame}
For $n \in \{3,4,6,7\}$, suppose that  $W$ is a compact smooth $n$-manifold with boundary and that some component of $\partial W$ does not admit a Riemannian metric with positive scalar curvature. Then the interior of $W$ does not admit a complete Riemannian metric of with uniformly positive scalar curvature $R\geq 1$. 
\end{corollary}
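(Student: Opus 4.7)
The plan is to argue by contradiction, combining Proposition \ref{prop:gromov-mu-bubble-exhaustion} with Proposition \ref{prop:Rade}. Suppose $M := \operatorname{int}(W)$ admits a complete metric with $R \geq 1$, and let $N$ be a component of $\partial W$ admitting no PSC metric. Proposition \ref{prop:gromov-mu-bubble-exhaustion} provides an exhaustion $\Omega_1 \subset \Omega_2 \subset \cdots$ of $M$ by compact codimension-zero submanifolds whose boundaries $\partial \Omega_i$ each carry a PSC metric. The strategy is to extract, for $i$ large, a union of components $\Sigma_i$ of $\partial \Omega_i$ sitting as a closed embedded separating hypersurface inside $N \times \R$, and then to apply Proposition \ref{prop:Rade} to produce a PSC metric on $N$, contradicting the choice of $N$.

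For the localization, fix a smooth collar $\phi \colon N \times [0,1) \hookrightarrow W$ with $\phi(N \times \{0\}) = N$. The slab $\phi(N \times [1/4, 3/4])$ is compact in $M$, so for $i$ large it lies in the interior of $\Omega_i$. Let $V_i$ be the connected component of $W \setminus \Omega_i$ containing $N$ (well defined since $N$ is connected and disjoint from $\Omega_i$). Then $V_i$ is connected, avoids $\Omega_i$, and contains $N$; this forces $V_i \subset \phi(N \times [0, 1/4))$. Its topological frontier $\Sigma_i := \overline{V_i} \setminus V_i$ cannot meet $\partial W$ (since $\overline{V_i} \cap \partial W = N \subset V_i$), and a standard local inspection of the two sides of $\partial \Omega_i$ shows that $\Sigma_i$ is in fact a union of entire connected components of $\partial \Omega_i$. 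In particular $\Sigma_i$ admits a PSC metric.

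Next, identify the collar image with $N \times (0, 1/4) \subset N \times \R$ and form $\widetilde V_i := V_i \cup (N \times (-\infty, 0])$. This is an open connected subset of $N \times \R$ whose topological frontier is exactly $\Sigma_i$; it contains the end $N \times (-\infty, 0)$, while its complement contains $N \times (1/4, \infty)$. Therefore $\Sigma_i \subset N \times \R$ is a closed embedded separating hypersurface, and the projection $\Sigma_i \to N$ has degree $\pm 1$. Proposition \ref{prop:Rade} (which for $n=3,4$ is reduced via this degree-one map to Propositions \ref{prop:dominate2d} and \ref{prop:dominate3d}) then yields a PSC metric on $N$, the desired contradiction.

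The main obstacle I anticipate is the localization step in the second paragraph: confirming that $V_i$ is genuinely confined to the collar, so that $\Sigma_i$ is a well-behaved union of components of $\partial \Omega_i$ inside $N \times \R$ rather than a piece of $\partial \Omega_i$ that drifts through distant regions of $M$. Once this is secured, the separating property and the degree calculation for $\Sigma_i \to N$ are essentially forced, and Proposition \ref{prop:Rade} closes the argument. As a side remark, Proposition \ref{prop:Rade} is stated for oriented $N$; this is automatic in the intended applications (for instance, when $\partial W$ is a homology sphere), and otherwise one can lift to the orientation double cover before invoking it.
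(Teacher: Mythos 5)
Your argument is correct and is essentially the paper's own proof: the paper likewise uses the collar neighborhood of the boundary to get a neighborhood of infinity diffeomorphic to $\partial W \times \R$, notes that for $i$ large the PSC boundaries $\partial\Omega_i$ from Proposition \ref{prop:gromov-mu-bubble-exhaustion} give a closed separating hypersurface there, and concludes via Proposition \ref{prop:Rade}; you have merely spelled out the localization to the collar that the paper leaves implicit. One caveat on your closing side remark: lifting to the orientation double cover would only produce a PSC metric on the cover of $N$, which does not contradict $N$ itself being non-PSC, so the non-orientable case is not actually handled that way—but this lies outside the oriented setting to which the paper's own argument (via Proposition \ref{prop:Rade}) is confined.
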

\begin{remark}
The restrictions on the dimension is enforced by Propositions \ref{prop:Rade} and \ref{prop:gromov-mu-bubble-exhaustion}. (See Remark \ref{rema:counterex-S1}.) 
\end{remark}

 \begin{remark} \label{construction}
     We note that for $n\in\{3,4\}$ the converse to Corollary \ref{coro:boundary-tame} holds in the following sense. Suppose that $Y^{n-1}$ is a compact (smooth) oriented $(n-1)$-manifold that admits a metric of positive scalar curvature. Then there exists a compact smooth $n$-manifold $W$ with $\partial W$ diffeomorphic to $Y$ so that the interior of $W$ admits a Riemannian metric with uniformly positive scalar curvature. When $n=3$, since $Y^2$ oriented compact admitting positive scalar curvature implies $Y^2$ is diffeomorphic to $S^2$ this easily follows from the existence of a metric with uniformly positive scalar curvature on the $3$-ball. When $n=4$, this follows from work of Kazaras \cite[Theorem B]{Kazaras} after noting that the cobordisms constructed there are cylindrical near the boundary and thus smoothly glue to a half-cylinder. 
 \end{remark}

\begin{proof}[Proof of Corollary \ref{coro:boundary-tame}]
Let $M$ denote the interior of $W$. The collar neighborhood theorem yields a neighborhood of infinity $U$ diffeomorphic to $\partial W \times \R$. For $i$ sufficiently large, the exhaustion obtained in Proposition \ref{prop:gromov-mu-bubble-exhaustion} will have $\partial \Omega_i$ a separating hypersurface in $U$. The assertion follows by combining Propositions \ref{prop:Rade} and \ref{prop:gromov-mu-bubble-exhaustion}. 
\end{proof}

Combining all these results, we derive the following corollary, which will be crucial for proving our main theorems.

\begin{corollary} \label{exhaustion}
Suppose $(M^4,g)$ is complete smooth oriented  4-manifold with scalar curvature  $R \geq 1$, then there's an exhaustion $\Omega_1 \subset \Omega_2\subset\dots$ so that each component of $\partial\Omega_i$ is a PSC $3$-manifold, namely of the form $(\#_{j=1}^J S^3/\Gamma_j) \# (\#_{k=1}^K S^2 \times S^1)$.
\end{corollary}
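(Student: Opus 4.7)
The plan is to simply assemble the two results already in hand: Gromov's $\mu$-bubble exhaustion (Proposition \ref{prop:gromov-mu-bubble-exhaustion}) in dimension $n=4$, and the classification of closed PSC $3$-manifolds (embodied in Proposition \ref{prop:dominate3d} applied to the identity map). Nothing new needs to be proved.

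First I would apply Proposition \ref{prop:gromov-mu-bubble-exhaustion} with $n=4$ to $(M^4,g)$. This is the content of the corollary's hypothesis, and it directly produces an exhaustion $\Omega_1\subset\Omega_2\subset\cdots$ by compact codimension-zero smooth submanifolds with smooth boundaries $\partial\Omega_i$, each of which is a closed $3$-manifold admitting a Riemannian metric of positive scalar curvature.

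Next I would verify that each connected component of every $\partial\Omega_i$ is an \emph{oriented} closed $3$-manifold with PSC. Orientability is immediate: $M$ is oriented, each $\Omega_i$ is a codimension-zero smooth submanifold, and so its boundary inherits an orientation from the outward normal. Each connected component of $\partial\Omega_i$ is then a closed connected oriented $3$-manifold, and the PSC metric guaranteed by Proposition \ref{prop:gromov-mu-bubble-exhaustion} restricts to a PSC metric on each component.

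Finally I would invoke the classification of closed oriented $3$-manifolds admitting PSC (Perelman, Gromov--Lawson, Schoen--Yau), as already quoted in Proposition \ref{prop:dominate3d} (or applied directly to the identity map of each component): any such $3$-manifold must be diffeomorphic to a connected sum of the stated form $(\#_{j=1}^J S^3/\Gamma_j)\#(\#_{k=1}^K S^2\times S^1)$. Combining these three observations yields the corollary. There is no real obstacle here — the statement is an immediate amalgamation of earlier results; if anything, the only sanity check is that the dimension $n=4$ falls within the range $3\leq n\leq 7$ required by Proposition \ref{prop:gromov-mu-bubble-exhaustion} and that orientation passes to the boundary components, both of which are trivial.
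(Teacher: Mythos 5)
Your proposal is correct and follows essentially the same route as the paper, whose proof is exactly "combine Proposition \ref{prop:gromov-mu-bubble-exhaustion} and Proposition \ref{prop:dominate3d}"; your extra remarks on orientability of the boundary components and the dimension check $n=4\in[3,7]$ are harmless elaborations of that same argument.
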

\begin{proof}
    Combine  Propositions \ref{prop:gromov-mu-bubble-exhaustion} and \ref{prop:dominate3d}. 
\end{proof}

\subsection{\texorpdfstring{$\mu$}{mu}-bubbles and the proof of Proposition \ref{prop:gromov-mu-bubble-exhaustion}}   The foundational method goes back to the influential work of Schoen--Yau who studied the second variation of area at area minimizing hypersurfaces \cite{ScYa79a,ScYa79b}. They observed that if $\Sigma^{n-1}$, $n\geq3$, is a 2-sided stable minimal hypersurface inside of a Riemannian manifold $(M^n,g)$ with positive scalar curvature then a suitable conformal deformation of the induced metric $g|_\Sigma$ yields a metric of positive scalar curvature on $\Sigma$. This gives a topological obstruction to positive scalar curvature when paired with an appropriate existence result for minimal hypersurfaces. 

A careful examination of Schoen-Yau argument reveals that it relies mostly on the second variation of area and not on minimality per se. Following \cite{ChCh24}, we describe next an idea of Gromov \cite{Gromov23} in which one gives up minimality by considering instead modified area functional. The key point is that the minimization of these functions can be \emph{localized} and the second variation of the functional can still obstruct the positivity of scalar curvature in certain cases. 

For $3\leq n \leq7$, suppose $(M^n,g)$ is a Riemannian manifold with non-empty boundary $\partial M$. Assume $\partial M=\partial_{-} M \sqcup \partial_{+} M$ is labeling of the boundary components such that each of them is non-empty and let $h$ be a smooth function on $\mathring M$ such that 
$$h\rightarrow+\infty~\textrm{on}~\partial_{-}M, \quad  h\rightarrow-\infty~\textrm{on}~\partial_{+}M.$$
Let $\Omega_0$ be a Caccioppoli set with smooth boundary $\partial \Omega_0\subset \mathring{M}$. For all Caccioppoli sets $\Omega$ in $M$ with $\Omega \triangle \Omega_0 \Subset \mathring{M} $, we consider the $\mu$-bubble functional: 
\begin{eqnarray}
\mathcal{A}(\Omega) = |\partial^\ast\Omega| - \int_M (\chi_\Omega-\chi_{\Omega_0}) h~\dvol.
\end{eqnarray}

A minimizer of $\mathcal A$ is called a \textit{$\mu$-bubble}. As the next proposition shows, they are easier to construct than stable minimal hypersurfaces (see Proposition 12 of \cite{ChCh24} for a proof).

\begin{proposition}
There exists a smooth minimizer $\Omega$ for $\mathcal{A}$ such that $\Omega\triangle \Omega_0$ is compactly contained in the interior of $M$.
\end{proposition}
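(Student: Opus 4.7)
The plan is to apply the direct method of the calculus of variations in three stages: (i) use the blow-up of $h$ at $\partial M$ as a barrier to confine any minimizing sequence to a fixed compact subset of $\mathring{M}$, (ii) extract a subsequential limit by BV-compactness and pass to the limit by lower semicontinuity, and (iii) invoke regularity theory for almost-minimizers of perimeter to conclude smoothness.

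For the confinement step, I would fix $c > \inf \mathcal{A}$ (attained by $\Omega_0$ itself) and choose open neighborhoods $U_\pm$ of $\partial_\pm M$ in $M$ such that $\mp h \geq C$ on $U_\pm$ for a large constant $C$ depending on $c$, $\Omega_0$, and $|U_\pm|$. If $\chi_\Omega - \chi_{\Omega_0}$ is nonzero on a positive-measure subset of $U_\pm$, then the term $-\int_M(\chi_\Omega - \chi_{\Omega_0})h \, \dvol$ picks up a contribution of sign forcing it to be at least $C$ times that measure, which for $C$ large enough makes $\mathcal{A}(\Omega) > c$. Hence any competitor with $\mathcal{A}(\Omega) \leq c$ agrees with $\Omega_0$ almost everywhere on $U_+ \cup U_-$, and so for any minimizing sequence $\{\Omega_k\}$ the symmetric difference $\Omega_k \triangle \Omega_0$ is contained in a fixed compact $K \Subset \mathring{M}$.

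Coercivity is then immediate: $|\partial^\ast \Omega_k|$ is bounded from $\mathcal{A}(\Omega_k) \leq c$ together with the now-bounded $h$-integral, and $|\Omega_k \triangle \Omega_0| \leq |K|$. BV-compactness yields a subsequential $L^1(M)$ limit $\chi_{\Omega_k} \to \chi_\Omega$; lower semicontinuity of perimeter together with continuity of $\Omega \mapsto \int_M(\chi_\Omega - \chi_{\Omega_0})h \, \dvol$ under $L^1$-convergence (using the $L^\infty$ bound on $h|_K$) give $\mathcal{A}(\Omega) \leq \liminf_k \mathcal{A}(\Omega_k)$, so $\Omega$ is a minimizer; the confinement passes to the limit to give $\Omega \triangle \Omega_0 \Subset \mathring{M}$.

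For regularity, a standard first-variation computation shows that $\Omega$ is an almost-minimizer of perimeter with modulus determined by the smooth function $h$, so the De Giorgi–Almgren–Tamanini regularity theory yields that $\partial^\ast \Omega$ is a $C^{1,\alpha}$ hypersurface away from a singular set of Hausdorff dimension at most $n-8$, which is empty in the range $3 \leq n \leq 7$. Elliptic bootstrap applied to the prescribed mean curvature equation $H_{\partial\Omega} = h|_{\partial\Omega}$ upgrades this to $C^\infty$. I expect the main obstacle to be step (i): making the barrier argument quantitative for arbitrary Caccioppoli competitors (not just smooth ones). This is handled via the global energy estimate above rather than by a localized truncation, and it relies crucially on the assumption that $h$ blows up at \emph{both} boundary components, which is exactly what makes the $\mu$-bubble functional coercive.
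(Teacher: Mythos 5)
Your overall architecture (direct method: barrier confinement near $\partial M$, BV compactness and lower semicontinuity, then almost-minimizer regularity for $3\le n\le 7$ and elliptic bootstrap via $H=h$) is the same route the paper takes; the paper defers the details to Proposition 12 of \cite{ChCh24} and only sketches the barrier heuristic in a remark. The compactness and regularity parts of your write-up are fine. The genuine problem is in your confinement step, which is exactly the nontrivial content of the proposition.

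Two issues. First, the sign claim is not available at the stated level of generality: on $U_-$ (where $h\ge C$) the term $-\int(\chi_\Omega-\chi_{\Omega_0})h$ is \emph{negative} if $\Omega$ adds material there, and likewise removing material in $U_+$ is rewarded, not penalized. Your estimate silently requires that $\Omega_0$ contains a collar of $\partial_-M$ and is disjoint from a collar of $\partial_+M$, so that only the penalized sign of deviation can occur in each collar; this positional hypothesis is implicit in the $\mu$-bubble setup and is genuinely needed (with $h\sim\tan$ as used later in the paper, an $\Omega_0$ avoiding the $\partial_-M$ collar gives $\inf\mathcal A=-\infty$). Second, and more seriously, even granting that hypothesis, the inference ``$\chi_\Omega-\chi_{\Omega_0}$ nonzero on a set of positive measure in $U_\pm$ forces $\mathcal A(\Omega)>c$ for $C$ large'' is false for any fixed $C$: the penalty is only $C$ times the deviation measure, which is not bounded below (a competitor differing from $\Omega_0$ by a tiny ball deep inside $U_-$ has energy arbitrarily close to $\mathcal A(\Omega_0)$). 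From $\mathcal A(\Omega)\le c$ one only gets $|(\Omega\triangle\Omega_0)\cap U_\pm|\le (c+B)/C$, so neither ``competitors agree with $\Omega_0$ a.e.\ on $U_+\cup U_-$'' nor the uniform containment $\Omega_k\triangle\Omega_0\subset K\Subset\mathring M$ follows — and without that, the $L^1$ limit may fail to lie in the admissible class, which is precisely what must be ruled out. The standard repair (and what the cited proof does, in line with the paper's barrier remark) is a cut-and-paste argument: replace $\Omega_k$ by $(\Omega_k\cup A_-)\setminus A_+$, where $A_\pm$ are collar regions of $\partial_\pm M$ bounded by level sets of a smoothed distance/band coordinate chosen via the coarea formula, so that the added slice perimeter is dominated by the bulk gain coming from $|h|\ge C$ on the modified regions; this yields a minimizing sequence that is uniformly confined without increasing $\mathcal A$, after which your compactness, lower semicontinuity, and regularity steps go through unchanged.
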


The first and second variations of the $\mu$-bubble functional are as follows. If $\Omega_t$ is a smooth 1-parameter family of regions with $\Omega_0=\Omega$ and normal speed $\varphi$ at $t=0$, then 
$$\frac{d}{dt} \mathcal{A}(\Omega_t) =\int_{\partial \Omega_t} (H-h) \varphi $$
where $H$ is the mean curvature of $\partial \Omega_t$. In particular, a $\mu$-bubble must satisfy $H=h$ along $\partial \Omega=\Sigma$. 

\begin{remark}The first variation can be used to formally explain why a minimizer should exist: $\partial_-M$ has mean curvature $H_{\partial_-M} - h = - \infty$ and thus acts as a strict barrier for a minimizing sequence for $\mathcal{A}(\cdot)$. Similar considerations hold for $\partial_+M$, and thus a minimizing sequence $\Omega_j$ has $\Omega_j\Delta\Omega_0$ compactly contained in the interior of $M$. Thus, standard arguments in geometric measure theory allow one to pass to a weak limit to find $\Omega$ minimizing $\mathcal{A}(\cdot)$ so that $\Omega\Delta\Omega_0$ compactly contained in the interior of $M$. Since $\mathcal{A}(\cdot)$ is a perturbation of the area functional (at small scales in the interior of $M$), standard regularity theory implies that $\partial\Omega=\Sigma$ is a smooth compact submanifold contained in the interior of $M$. (When $n\geq 8$ there could exist singularities along $\partial\Omega$, so we cannot directly apply this method without further modification.)
\end{remark}

Assuming that $\partial\Omega$ satisfies $H=h$ along $\Sigma$, the second variation is
\begin{eqnarray}\label{eq:stability-mu-bubb}
\dfrac{d^2}{dt^2}\Big|_{t=0} \mathcal{A}(\Omega_t) = \int_{\Sigma} |\nabla_\Sigma \varphi|^2- \frac{1}{2}(R_M  - R_\Sigma + |A|^2 + h^2+2\langle \nabla_M h,\nu \rangle)\varphi^2  
\end{eqnarray}
See e.g.\ Lemma 14 \cite{ChCh24} (one must take $u=1$ and rearrange the terms slightly). In the above formula $\mathring A$ is the trace-free part of the second fundamental form of $\Sigma$ (this will not matter since we'll discard it using $|\mathring A|^2 \geq 0$ in the sequel), $R_M$ is the scalar curvature of $M$ and $R_\Sigma$ is the scalar curvature of the induced metric on $\Sigma$. 

The minimizer $\Omega$ obtained above will satisfy the stability inequality: $\tfrac{d^2}{dt^2}|_{t=0} \mathcal{A}(\Omega_t)\geq 0$ for any $\varphi\in C^\infty(\Sigma)$. 
\begin{proposition}\label{prop:bandestimate}
For $3\leq n\leq 7$, suppose $(M^n,g )$ is a Riemannian manifold with non-empty boundary and scalar curvature $R\geq\Lambda>0$. Assume $\partial M=\partial_{-} M \sqcup \partial_{+} M$ is a labeling of the boundary components such that each of them is non-empty. There exists a constant $D=D(\Lambda)>0$ such that if the distance $d(\partial_{-} M, \partial_{+} M)>D$, then in the interior of $M$ there must be a there exists a smoothly embedded closed 2-sided hypersurface  $\Sigma^{n-1}$ which itself admits a metric with positive scalar curvature. 
\end{proposition}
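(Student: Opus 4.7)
The plan is to apply the $\mu$-bubble machinery with a warping function $h$ that diverges to $+\infty$ on $\partial_-M$ and to $-\infty$ on $\partial_+M$, and then extract the desired PSC hypersurface from the resulting stability inequality. Let $\rho:M\to[0,L]$ be a smooth approximation to $d(\cdot,\partial_-M)$, where $L:=d(\partial_-M,\partial_+M)$, satisfying $|\nabla_M\rho|\leq 1+\epsilon$, with $\rho\equiv 0$ near $\partial_-M$ and $\rho\equiv L$ near $\partial_+M$. I set $h(x):=f(\rho(x))$ for a smooth strictly decreasing $f:(0,L)\to\R$ with $f(0^+)=+\infty$, $f(L^-)=-\infty$, chosen so that $f^2-2(1+\epsilon)|f'|\geq -\Lambda'$ pointwise for some $\Lambda'\in(0,\Lambda)$; this yields the key pointwise inequality
\[
 R_M+h^2-2|\nabla_M h|\;\geq\;\Lambda-\Lambda'\;=:\;c_0\;>\;0 \quad \text{on }\mathring M.
\]

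With $\Omega_0:=\{\rho<L/2\}$, the existence proposition supplies a minimizer $\Omega$ of $\mathcal A$ whose boundary $\Sigma$ is a smooth compact hypersurface in $\mathring M$; divergence of $h$ at $\partial_\pm M$ forces $\Sigma$ to stay strictly away from $\partial M$, while $\Omega_0\neq\varnothing,M$ guarantees $\Sigma\neq\varnothing$, and $\Sigma$ is two-sided as the boundary of a Caccioppoli set. Working on any connected component of $\Sigma$, discarding $|\mathring A|^2\geq 0$ in \eqref{eq:stability-mu-bubb}, and using $\langle\nabla_M h,\nu\rangle\geq -|\nabla_M h|$, the stability inequality rearranges to
\[
 \int_\Sigma 2|\nabla_\Sigma\varphi|^2+R_\Sigma\varphi^2\,\dvol \;\geq\; \int_\Sigma\bigl(R_M+h^2-2|\nabla_M h|\bigr)\varphi^2\,\dvol \;\geq\; c_0\int_\Sigma\varphi^2\,\dvol
\]
for every $\varphi\in C^\infty(\Sigma)$, so the Schr\"odinger operator $-2\Delta_\Sigma+R_\Sigma$ has strictly positive first eigenvalue.

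When $n\geq 4$, the conformal Laplacian $L_\Sigma:=-\tfrac{4(n-2)}{n-3}\Delta_\Sigma+R_\Sigma$ has leading coefficient strictly larger than $2$, so its Rayleigh quotient dominates that of $-2\Delta_\Sigma+R_\Sigma$ and $\lambda_1(L_\Sigma)>0$; a positive first eigenfunction $u$ then gives a conformal metric $\tilde g=u^{4/(n-3)}g|_\Sigma$ of strictly positive scalar curvature on $\Sigma$ via the standard Schoen--Yau/Kazdan--Warner change. When $n=3$, $\Sigma$ is a closed surface and testing the previous displayed inequality with $\varphi\equiv 1$ together with Gauss--Bonnet gives $\chi(\Sigma)>0$, so $\Sigma$ is a union of spheres and carries PSC. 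To produce $f$, one solves the critical ODE $f'=-(\Lambda'+f^2)/(2(1+\epsilon))$, yielding $f(s)=\sqrt{\Lambda'}\cot\bigl(\tfrac{\sqrt{\Lambda'}}{2(1+\epsilon)}s\bigr)$ on an interval of length $2\pi(1+\epsilon)/\sqrt{\Lambda'}$; fixing $\Lambda'\in(0,\Lambda)$ and $\epsilon>0$ small and setting $D(\Lambda):=2\pi(1+\epsilon)/\sqrt{\Lambda'}$ gives the quantitative threshold required. The main technical point is the joint calibration of $\Lambda'$, $\epsilon$, and the length of $f$'s domain so that the pointwise inequality above survives the mollification of the distance function; beyond that the argument is a direct application of the $\mu$-bubble second variation.
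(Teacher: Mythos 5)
Your argument is correct and is essentially the paper's own proof: a smoothed distance function composed with a Riccati-type blow-up profile (your $\sqrt{\Lambda'}\cot$ is just a reparametrization of the paper's $-\frac{\pi}{D}\tan\rho$), the $\mu$-bubble existence and stability inequality with $|\mathring A|^2$ discarded, Gauss--Bonnet for $n=3$, and positivity of the conformal Laplacian plus a conformal change for $4\leq n\leq 7$, with the same threshold mechanism $D\sim\Lambda^{-1/2}$. Only a cosmetic fix is needed: $\rho$ should attain $0$ and $L$ only on $\partial_\pm M$ (or one restricts to a sub-band between regular levels, as the paper does), since if $\rho\equiv 0$ on a neighborhood of $\partial_-M$ then $h=f(\rho)$ is not finite there; and for $n=3$ the components with $\chi>0$ may also be projective planes when $M$ is non-orientable, which still carry PSC.
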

The estimate for $D(\Lambda)$ given below could be strengthened to give a sharp/explicit estimate, but we will not bother with this here since it will not matter in the sequel. See e.g.\ \cite{Gromov:pos-mac,Gromov23,Chodosh21} and references therein. 
\begin{proof}
It suffices to assume there is $\rho : M\to[-
\frac{\pi}{2},\frac{\pi}{2}]$ smooth with $\textrm{Lip}(\rho) \leq \frac \pi D$, $\rho^{-1}(\pm\frac \pi 2)= \partial_\pm M$. Indeed, one may smooth out (and rescale) the distance function from $\partial_-M$ to obtain such $\rho$ with $\partial_-M = \rho^{-1}(-\frac\pi 2)$ and $\frac\pi 2$ a regular value. 

Then we can define $M$ to be $\rho^{-1}([-\frac\pi2,\frac\pi2])$. Then define $h(x) = -\frac \pi D \tan(\rho(x))$ on the interior of $M$. Note that
\[
h^2 + 2\langle\nabla_M h,\nu\rangle \geq h^2 - 2 |\nabla_M h| \geq \frac{\pi^2}{D^2}\left( \tan(\rho(x))^2 - \sec(\rho(x))^2\right) = - \frac{\pi^2}{D^2}. 
\]
Using this in the stability inequality \eqref{eq:stability-mu-bubb} (and using $R_M\geq \Lambda$, and discarding the $|\mathring A|^2$ term) we obtain 
\[
\int_\Sigma |\nabla_\Sigma \varphi|^2 +\frac 12 R_\Sigma \varphi^2 - \frac 12\left(\Lambda - \frac{\pi^2}{D^2}\right)\varphi^2  \geq 0.
\]
As long as $D\geq D(\Lambda)>0$ is sufficiently large, the term $\frac 12\left(\Lambda -  \frac{\pi^2}{D^2}\right)$ will be uniformly positive. To be definite we can take $D(\Lambda)^2 = \frac{2\pi}{\Lambda}$ so this term will be $\geq \frac{\Lambda}{4}$. 

Thus
\begin{equation}\label{eq:spectral-R-mubub}
\int_\Sigma |\nabla_\Sigma \varphi|^2 +\frac 12 R_\Sigma \varphi^2 \geq \frac{\Lambda}{4} \int_\Sigma \varphi^2. 
\end{equation}
When $n=3$, $\dim\Sigma =2$, so $R_\Sigma = 2K_\Sigma$ is the Gaussian curvature and we can take $\varphi=1$ on any connected component $\Sigma'\subset \Sigma$ to find
\[
2\pi\chi(\Sigma') = \int_{\Sigma'} K_\Sigma > 0,
\]
so $\Sigma'$ is a sphere (or  possibly projective plane if $M$ is nonorientable) and thus admits positive scalar curvature. 

When $n\geq 4$ we observe that
\[
\frac{2(m-2)}{m-3} \geq 1
\]
so using $|\nabla_\Sigma \varphi|^2\geq 0$ we find that 
\[
\int_\Sigma \frac{2(m-2)}{m-3}|\nabla_\Sigma \varphi|^2 +\frac 12 R_\Sigma \varphi^2 \geq \frac{\Lambda}{4} \int_\Sigma \varphi^2,
\]
i.e. the operator
\[
L_\Sigma := -\frac{4(m-2)}{m-3}\Delta_\Sigma +  R_\Sigma
\]
is a positive operator (e.g.\ all eigenvalues are positive). Letting $u>0$ denote the first eigenfunction of $L_\Sigma$ (so $L_\Sigma u = \lambda u > 0$), a computation (see p.\ 9 in \cite{ScYa79a}) implies that $\tilde g_\Sigma = u^{\frac{4}{n-3}}g_\Sigma$ has scalar curvature
\[
\tilde R = u^{\frac{m+1}{m-3}}Lu > 0. 
\]
This completes the proof. 
\end{proof}

We can now prove the exhaustion result.  
\begin{proof}[Proof of Proposition \ref{prop:gromov-mu-bubble-exhaustion}]
     Fix a compact region $M_1'$ with smooth boundary so that for some $p\in M_1'$, $d(p,\partial M_1') > D + \varepsilon$ for $D=D(\Lambda)>0$ as in Proposition \ref{prop:bandestimate}, and $\varepsilon>0$ small enough so that $B_\varepsilon(p)$ has smooth boundary. Let $M_1 = \overline{M_1'\setminus B_\varepsilon(p)}$ and set $\partial_-M_1 = \partial B_\varepsilon(p)$ and $\partial_+ M_1 = \partial M_1'$. Proposition \ref{prop:bandestimate} produces $\Omega\subset M_0$ so that $\partial\Omega = \partial B_\varepsilon(p) \cup \partial'\Omega$ and each component of $\partial'\Omega$ admits positive scalar curvature. Set $\Omega_1 = \Omega\cup B_\varepsilon(p)$. Assume that $\Omega_1\subset \dots\Omega_{i-1}$ have been chosen as in the statement of the theorem. Choose a smooth compact region $\Omega_{i-1}\subset M_i'$ with $d(\partial M_i',\Omega_{i-1}) > D$. As before, applying Proposition \ref{prop:bandestimate} to $M_i := \overline{M_i'\setminus \Omega_{i-1}}$ yields $\Omega \subset M_i$ whose boundary components in the interior of $M_i$ admit positive scalar curvature. Thus, setting $\Omega_i : =\Omega\cup \Omega_{i-1}$ completes the inductive step. 
\end{proof}

\section{Classifying uniformly PSC tame contractible \texorpdfstring{$4$}{four}-manifolds}
 We recall the following well-known fact which follows from Lefschetz duality: 
\begin{lemma}\label{lemm:bdry-contract}
If $W$ is a compact contractible topological $4$-manifold then $\partial W$ is an integral homology sphere. 
\end{lemma}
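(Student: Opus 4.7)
The plan is to combine Lefschetz duality with the long exact sequence of the pair $(W,\partial W)$. First I would note that a contractible topological $4$-manifold is simply connected and hence orientable, and that $\partial W$ is a closed topological $3$-manifold; topological Lefschetz duality is available and gives $H_k(W,\partial W;\Z) \cong H^{4-k}(W;\Z)$.

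Next I would use contractibility, which yields $H^0(W;\Z) = \Z$ and $H^j(W;\Z) = 0$ for $j \geq 1$. By Lefschetz duality this says $H_4(W,\partial W;\Z) = \Z$ and $H_k(W,\partial W;\Z) = 0$ for $k \leq 3$.

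Then I would feed this into the long exact sequence
\[
\cdots \to H_k(\partial W) \to H_k(W) \to H_k(W,\partial W) \to H_{k-1}(\partial W) \to \cdots
\]
using $H_k(W) = 0$ for $k \geq 1$. For $k \geq 2$ this gives isomorphisms $H_{k-1}(\partial W) \cong H_k(W,\partial W)$, so $H_1(\partial W) = H_2(\partial W) = 0$ and $H_3(\partial W) \cong H_4(W,\partial W) = \Z$. For the bottom of the sequence the exact segment $0 \to H_0(\partial W) \to H_0(W) \to H_0(W,\partial W) \to 0$ reads $0 \to H_0(\partial W) \to \Z \to 0 \to 0$, giving $H_0(\partial W) = \Z$ (so in particular $\partial W$ is connected). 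Hence $H_\ast(\partial W;\Z) \cong H_\ast(S^3;\Z)$.

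There is no real obstacle here; the only point that needs a moment of care is confirming that Lefschetz duality applies in the topological category and that orientability is free from contractibility, both of which are standard.
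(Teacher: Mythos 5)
Your proof is correct and is exactly the argument the paper has in mind: the paper states the lemma as a well-known consequence of Lefschetz duality and gives no further details, and your combination of topological Lefschetz duality (valid since contractibility gives simple connectivity, hence orientability) with the long exact sequence of $(W,\partial W)$ is the standard way to deduce $H_*(\partial W;\Z)\cong H_*(S^3;\Z)$, including connectedness of $\partial W$.
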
 

The next result collects several deep results in topology together to obstruct connect sums of the Poincar\'e homology sphere $P$ from bounding a \emph{smooth} contractible $4$-manifold. (Note that by Freedman's work \cite{Freedman}, any integral homology sphere bounds a compact contractible topological $4$-manifold.)
\begin{proposition}\label{prop:poincare-bdry}
For $L,M\geq 0$ suppose that $W$ is a compact contractible smooth $4$-manifold with $\partial W$ diffeomorphic to $(\#_{\ell=1}^L P) \# (\#_{m=1}^M -P)$. Then $L=M=0$ $\partial W$ is diffeomorphic to $S^3$, and $W$ is homeomorphic to the $4$-ball. 
\end{proposition}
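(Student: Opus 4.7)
The plan is to combine smooth $4$-dimensional gauge-theoretic obstructions with Freedman's topological classification of simply connected $4$-manifolds. Since $W$ is contractible it is in particular a smooth integer homology $4$-ball, so $\partial W$ represents the trivial class in the smooth integral homology cobordism group $\Theta^3_\Z$. The argument reduces to showing $\partial W \cong S^3$: once this is known, capping $W$ off with $B^4$ produces a closed simply connected topological $4$-manifold that is homology-equivalent to $S^4$, hence homeomorphic to $S^4$ by Freedman, and removing an open ball recovers that $W$ is homeomorphic to $B^4$.

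To rule out $L \neq M$ I would apply the Fr\o yshov $h$-invariant: a surjective homomorphism $h : \Theta^3_\Z \to \Z$ that is additive under connect sum, antisymmetric under orientation reversal, nonzero on $P$, and which vanishes on boundaries of smooth integer homology $4$-balls. The computation $0 = h(\partial W) = (L-M)\,h(P)$ immediately forces $L = M$. The same conclusion is available from the Heegaard Floer correction term $d$ or from the Neumann--Siebenmann $\overline{\mu}$-invariant.

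I expect the remaining case $L = M \geq 1$ to be the main difficulty. All standard scalar-valued homology cobordism invariants ($h$, $d$, Casson, Rokhlin, Manolescu's $\alpha,\beta,\gamma$) are antisymmetric under orientation reversal and hence vanish identically on $\#^L(P \# -P)$, so a more refined obstruction is required. My approach would be to glue $W$ to a standard smooth filling of $-\partial W$: setting $X := \natural^L X_{E_8} \,\natural\, \natural^L(-X_{E_8})$, where $X_{E_8}$ is the negative-definite $E_8$-plumbing with $\partial X_{E_8} = P$, one forms the closed smooth simply connected spin $4$-manifold $Z := W \cup_{\partial W}(-X)$ with intersection form $L(E_8) \oplus L(-E_8)$ and signature zero. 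A sharp $\mathrm{Pin}(2)$-equivariant or involutive Seiberg--Witten obstruction in the spirit of Manolescu applied to $Z$, or alternatively an appeal to Taubes' periodic-ends theorem applied to an infinite stacking of copies of $W$ along $\partial W$, should furnish the needed contradiction and complete the reduction to $\partial W \cong S^3$.
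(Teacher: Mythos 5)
Your first and last steps are essentially the paper's: the paper also deduces $L=M$ from a homology-cobordism invariant (the Ozsv\'ath--Szab\'o $d$-invariant rather than Fr\o yshov's $h$; either works, since $d(\partial W)=0$ for the boundary of a smooth contractible manifold and $d$ is additive with $d(P)\neq 0$), and it also concludes with Freedman once $\partial W\cong S^3$. The gap is exactly at the step you identify as the main difficulty, $L=M\geq 1$, and neither of your two suggestions works as stated. The closed-manifold route is a dead end: the intersection form of $Z=W\cup_{\partial W}(-X)$ is even, indefinite, and of signature zero, hence isomorphic to $\oplus^{8L}H$ --- the same form as $\#^{8L}\,S^2\times S^2$. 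So $Z$ is a closed simply connected spin $4$-manifold of a homeomorphism type that is smoothly realized by a standard manifold; Furuta's $10/8$ bound is vacuous at signature zero, and no $\mathrm{Pin}(2)$-equivariant, involutive, or other closed-manifold Seiberg--Witten argument can extract a contradiction merely from the existence of such a $Z$. There is no ``sharp obstruction'' to apply here.

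Your alternative, Taubes' end-periodic theorem, is the right tool and is what the paper uses, but ``infinite stacking of copies of $W$ along $\partial W$'' does not parse: $W$ has connected boundary and is not a cobordism from anything to itself, so it cannot be stacked. The missing construction is to first turn $W$ into a periodic piece: attach a $4$-dimensional $3$-handle to $W$ along the separating $2$-sphere in $\partial W=(\#^L P)\#(\#^L-P)$ to obtain a compact manifold $A$ with $\partial A=(\#^L P)\sqcup(-\#^L P)$, which is a \emph{simply connected} homology cobordism from $\#^L P$ to itself (simple connectivity, inherited from contractibility of $W$, is the crucial feature distinguishing $A$ from the useless product $(\#^L P)\times I$ and is what Taubes' hypotheses on the periodic end require). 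Capping one end with the negative definite boundary connected sum of $L$ copies of the $E_8$-plumbing and stacking infinitely many copies of $A$ yields an end-periodic negative definite $4$-manifold with non-diagonalizable intersection form $\oplus^L(-E_8)$, contradicting Taubes' diagonalization theorem for end-periodic manifolds. This is precisely the statement the paper invokes: if $Y$ bounds a negative definite $4$-manifold with non-diagonal form, then $Y\#(-Y)$ cannot bound a contractible $4$-manifold. Without this (or an equivalent) argument, your proof is missing its central step.
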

\begin{proof}
Since $\partial W$ is the boundary of a contractible smooth $4$-manifold, it follows that the Heegaard Floer $d$-invariant\footnote{Note that a homology $3$-sphere has a unique $\spin^c$ structure.} satisfies $d(\partial W) = 0$ by \cite[Theorem 1.2]{osd}. On the other hand, by \cite[Theorem 4.3 and Proposition 6.3]{osd} we find that 
\[
d((\#_{\ell=1}^L P) \# (\#_{m=1}^M -P)) = (L-M) d(P). 
\]
and $d(P) \neq 0$ by \cite[\S 8.1]{osd}. Thus $L=M$ so we see that either $\partial W$ is $S^3$ or $\#_{\ell=1}^L(P \# -P)$. The latter case cannot occur by the periodic end theorem of Taubes \cite{taubesp} which implies that if $Y$ is a homology $3$-sphere that bounds a negative definite $4$-manifold with non-diagonal intersection form (e.g. $Y= \#_{\ell=1}^L P$ bounds the boundary connect sum of the $E_8$-plumbing of spheres) then $Y \# -Y$ cannot bound a contractible $4$-manifold. 

Thus, $\partial W$ is diffeomorphic to $S^3$. The work of  Freedman \cite{Freedman} implies that $W$ is homeomorphic to the $4$-ball. This finishes the proof. 
\end{proof}

We now prove that a complete metric with uniformly positive scalar curvature distinguishes the ball up to homeomorphism among tame contractible $4$-manifold.

\begin{proof}[Proof of Theorem \ref{thm:b4}]
Consider $W$ a compact contractible smooth $4$-manifold and denote by $M$ the interior. Assume that $M$ admits a complete Riemannian metric of with uniformly positive scalar curvature $R\geq 1$. By Corollary \ref{coro:boundary-tame}, $\partial W$ admits positive scalar curvature. Furthermore, by Lemma \ref{lemm:bdry-contract}, $\partial W$ is an integral homology sphere. Thus by Corollary \ref{coro:poinc-PSC}, we see that $\partial W$ is either $S^3$ or   $(\#_{\ell=1}^L P) \# (\#_{m=1}^M -P)$. The assertion thus follows from Proposition \ref{prop:poincare-bdry}. 
\end{proof}

Using this, we can now consider the case of $W$ a Mazur manifold.
\begin{proof}[Proof of Theorem \ref{theo:Mazur}]
Assume that $W$ is a compact contractible smooth $4$-manifold admitting a smooth handle decomposition with one $0$-handle, one $1$-handle and one $2$-handle. Assume that the interior $M$ admits a complete Riemannian metric with uniformly positive scalar curvature $R\geq 1$. By Theorem \ref{thm:b4} (proven above) we see that $W$ is homeomorphic to the $4$-ball. 

We now observe that the $2$-handle is attached along a knot on the boundary of the $1$-handle which is $S^1\times S^2$. Consequently, we have obtained $S^3$ by performing surgery along a knot $K$ in $S^1\times S^2$. Hence, Gabai's property R theorem \cite{gabai} implies that $K$ is smoothly isotopic to the $S^1$ factor of $S^1\times S^2$. In the 4-dimensional handle picture, the attaching sphere of the $2$-handle intersects the belt sphere of the $1$-handle geometrically once, allowing us to cancel the $1$- and $2$-handle smoothly. Thus, $W$ must be diffeomorphic to a $4$-ball. This completes the proof. 
\end{proof}

\section{A general obstruction via end Floer homology} \label{section:endFloer}
 In order to obstruct the existence of complete positive scalar curvature metric in open 4-manifolds we will first recall the construction of end Floer homology. 

\begin{definition} Let $X$ be a smooth open 4-manifold. An asymptotic $\spin^c$ structure $\mathfrak{s}$ on $X$ is a $\spin^c$ structure defined on $X\sm K$ for some compact subset $K\subset X$. Two such $\spin^c$ structures $\mathfrak{s_1}$ and $\mathfrak{s_2}$ defined on $X\sm K_1$ and $X\sm K_2$ are said to be equal at infinity if there exists a compact set $K'$ which contains both $K_1$ and $K_2$ and $\mathfrak{s_1} |_{X\sm K'} = \mathfrak{s_2} |_{X\sm K'}.$
    
\end{definition}

 \begin{definition}Let $X$ be a smooth 4-manifold. We call $X_1\subset X_2\subset \cdots$ be an {\em{exhaustion}} if it satisfies the following properties: 
 \begin{itemize}
     \item[(i)] $\cup_i X_i = X$,
     \item[(ii)] $X_i$ is a smooth compact 4-manifold with boundary $Y_i$ for all $i$.
 \end{itemize} 
 Moreover, an exhaustion is called {\em{admissible}} if the map induced by the inclusion $H^1(X_{i+1}\sm X_i) \to H^1(Y_{i+1})$ is surjective for all $i$.
\end{definition}

\begin{remark}
    If $W = W_1\cup_Y W_2$ be a cobordism such that the induced map by inclusion $H^1(W_1)\to H^1(Y)$ is surjective then given $\mathfrak{s_i} \in spin^c(W_i)$ for $i=1,2$, there exists an unique $spin^c$ structure $\mathfrak{s}$ on $W$ which restricts to $\mathfrak{s_i}$ on $W_i$.
\end{remark}

\begin{remark} \label{admissible cobordism}
    Let $W$ be a cobordism from $Y_1$ to $Y_2$ which is obtained from $Y_1\times [0,1]$ in one of the following ways:
    \begin{itemize}
    
        \item Attach a $2-$handle along a knot $K\in Y_1 \times \{1\}$ which represents a primitive, non-torsion element in $H_1(Y_1)$ or,
        \item  $W$ is a rational-homology cobordism, i.e. $H_k(W,Y_i; \Q)= 0$ for all $k$ and $i=0,1$ or,
        \item Attach a $1-$handle along $Y_1\times \{1\}$,
    \end{itemize}
    then the induced map by the inclusion $H^1(W)\to H^1(Y_2)$ is surjective. 
\end{remark}

Consider a smooth open 4-manifold $X$ with an asymptotic $\spin^c$ structure $\mathfrak{s}$. Given an admissible exhaustion $X_1\subset X_2\subset \cdots$ of $X$, we can define the end Floer homology $HE(X, \mathfrak{s})$ as the direct limit of the reduced Floer homology groups $HF^+_\textnormal{red}(Y_i, \mathfrak{s}|_{Y_i})$, where the morphism is induced by the cobordisms $X_{i+1}\setminus \mathring{X_i}$. To be more precise, suppose $\omega$ is a $2-$form on $X\setminus K$ for some compact set $K\subset X$. 

  Now, consider the admissible cobordism $W_{ij}= X_j\setminus \mathring{X_i}$ from $Y_i$ to $Y_j$. Then we have the $\omega$-twisted induced map $\ul{F}^+_{W_{ij};\omega} : \ul{HF}_\textrm{red}^+(Y_i, \omega|_{Y_i}) \to \ul{HF}_\textrm{red}^+(Y_j, \omega|_{Y_j})$ (see \cref{perturbedHF}). The \textit{end Floer homology} $\ul{HE}(X,\mathfrak{s},\omega)$ is defined as the direct limit of $\ul{F}^+_{W_{ij};\omega}$. Note that if $Y$ is a rational homology sphere, then the Heegaard Floer homology with twisted coefficients is equivalent to the untwisted version. For further details on twisted Heegaard Floer theory, refer to \cite{twistedos-sz}.

 \begin{theorem}[Gadgil \cite{gadgil2009open}] \label{weldefined} Let $X$ be a smooth open 4-manifold and $\mathfrak{s}$ be an admissible $spin^c$ structure on $X$. Then, $\underline{HE}(X,\mathfrak{s};\omega)$ does not depend on the choices of admissible exhaustion. 
 \end{theorem}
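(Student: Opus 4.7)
The plan is to prove independence of the admissible exhaustion by passing to a common \emph{interleaved} refinement and then invoking the composition law for $\omega$-twisted Heegaard Floer cobordism maps together with the cofinality of direct limits.

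First, given two admissible exhaustions $X_1 \subset X_2 \subset \cdots$ and $X'_1 \subset X'_2 \subset \cdots$ of $X$, I would use the compactness of each $X_i$ and $X'_j$ together with the fact that both families exhaust $X$ to extract subsequences $i_1 < i_2 < \cdots$ and $j_1 < j_2 < \cdots$ satisfying
\[
X_{i_1} \subset X'_{j_1} \subset X_{i_2} \subset X'_{j_2} \subset \cdots .
\]
Since a cofinal subsystem of a directed system computes the same direct limit, it suffices to identify the direct limit along $\{X_{i_k}\}$ with the direct limit along $\{X'_{j_k}\}$ by showing that both sit inside a single commutative zig-zag built from the interleaved inclusions.

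Second, I would apply the composition law for $\omega$-twisted cobordism maps (as in \cite{twistedos-sz}) to the decompositions
\[
X_{i_{k+1}} \setminus \mathring{X}_{i_k} = \bigl(X'_{j_k} \setminus \mathring{X}_{i_k}\bigr) \cup_{Y'_{j_k}} \bigl(X_{i_{k+1}} \setminus \mathring{X'}_{j_k}\bigr),
\]
together with the analogous decomposition of $X'_{j_{k+1}} \setminus \mathring{X'}_{j_k}$ through $Y_{i_{k+1}}$. Naturality of the twisted coefficient system under restriction of $\omega$, combined with the Ozsv\'ath--Szab\'o composition law, then identifies each original transition map as a composition of the two intermediate transition maps in the zig-zag. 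The direct limit of the zig-zag is canonically isomorphic to both direct limits under consideration, which yields the theorem.

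The main obstacle, and the step requiring the most care, is verifying that the intermediate cobordisms $X'_{j_k} \setminus \mathring{X}_{i_k}$ and $X_{i_{k+1}} \setminus \mathring{X'}_{j_k}$ themselves satisfy the admissibility hypothesis that $H^1$ surjects onto the $H^1$ of the outgoing boundary, so that the composition law applies and the induced twisted maps are canonically defined. This is not automatic from admissibility of the two original exhaustions. I would handle it by enlarging the chosen interleaving: if an intermediate slice fails $H^1$-surjectivity, one thickens it by attaching small product neighborhoods of embedded loops representing the offending $H^1$ classes (using that any such class in $H^1(Y')$ is carried by a loop that extends into the surrounding piece), and passes to a further subsequence of the interleaving. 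Because the surjectivity condition is preserved by the handle-attachment and rational-homology-cobordism operations listed in Remark \ref{admissible cobordism}, and because each enlargement keeps the interleaving cofinal in both original systems, this procedure terminates to produce a fully admissible interleaving in which the composition law of the previous paragraph applies verbatim.
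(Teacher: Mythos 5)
Since the paper does not actually prove this statement---it is quoted from Gadgil \cite{gadgil2009open}---your proposal has to stand on its own, and its overall architecture (interleave the two exhaustions, use cofinality of direct limits, and reduce commutativity of the zig-zag triangles to the composition law for $\omega$-twisted cobordism maps, all up to the $\doteq$ ambiguity of projective systems) is indeed the standard and correct skeleton. You also correctly isolate the crux: when $X_{i_k}\subset X'_{j_k}\subset X_{i_{k+1}}$, the composition law expresses $\ul{F}^+_{X_{i_{k+1}}\sm \mathring{X}'_{j_k};\omega}\circ \ul{F}^+_{X'_{j_k}\sm \mathring{X}_{i_k};\omega}$ as a \emph{sum} over all $\spin^c$ structures on $X_{i_{k+1}}\sm\mathring{X}_{i_k}$ agreeing with $\mathfrak{s}$ on the two halves, and these form a torsor over the cokernel of $H^1(X'_{j_k}\sm\mathring{X}_{i_k})\oplus H^1(X_{i_{k+1}}\sm\mathring{X}'_{j_k})\to H^1(Y'_{j_k})$; admissibility of the two given exhaustions does not make this cokernel vanish, so the triangles need not commute with the single-$\spin^c$ transition maps.

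The gap is in your proposed repair of exactly this point. First, the repair conflates $H^1(Y'_{j_k})$ with $H_1$: cohomology classes of the boundary are not ``carried by loops'' that one can thicken, and attaching product neighborhoods of loops to the intermediate slice does not manifestly improve surjectivity onto $H^1(Y'_{j_k})$. Second, and more seriously, ``thickening'' the slice replaces $X'_{j_k}$ by a compact set that is \emph{not} a term of either given exhaustion and changes its boundary $3$-manifold; the resulting zig-zag then interpolates between each original system and a third, newly built exhaustion, so cofinality no longer identifies the two original direct limits---this is circular, since comparing an arbitrary exhaustion with a modified one is precisely the theorem being proved. Third, the appeal to \cref{admissible cobordism} is unjustified: the intermediate cobordisms $X'_{j_k}\sm\mathring{X}_{i_k}$ are not produced by the handle-attachment or rational-homology-cobordism moves listed there, and no argument is given that the enlargement procedure terminates or preserves admissibility of the original transition maps. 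A correct argument must either produce a common refinement that is genuinely admissible with respect to both systems, or else work with the total $\omega$-twisted maps summed over all compatible $\spin^c$ structures (for which the composition law holds unconditionally) and then use admissibility of each original exhaustion to identify those with the single-$\spin^c$ transition maps defining $\ul{HE}$; neither is accomplished by the procedure you describe, so as written the proof of the key commutativity step is missing.
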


We call a 3-manifold $Y$ an $L-$space if $\ul{HF}_\textrm{red}^+(Y,M)=0$ holds for every $\Z[H^1(Y,\Z)]$ module $M$. Notably, any 3-manifold that permits a positive scalar curvature metric also is an $L-$space \cite[Proposition 2.3]{l-space}. With this in mind, we have the following vanishing result of end Floer homology. 

\begin{remark}\label{hfhat}
Alternatively, we can also define a minus version of end Floer homology by considering all the groups and cobordism maps on $\ul{HF}^-_\textrm{red}$ and Gadgil's proof of \cref{weldefined} also valid for this and gives an invariant.
    
\end{remark}

\begin{theorem}\label{thm: vanishing HE}
    Let $X$ be a smooth open 4-manifold which admits an exhaustion $\Omega_1 \subset \Omega_2\subset\dots$ so that each component of $\partial\Omega_i$ is an $L-$space. Then $\ul{HE}(X,\mathfrak{s};\omega)= 0$ for all choice of asymptotic $\spin^c$ structures $\mathfrak{s}$ and 2-forms $\omega$.
\end{theorem}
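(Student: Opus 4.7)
The plan is to use a cofinality argument combined with the TQFT composition law for Heegaard Floer cobordism maps to factor all comparison maps in the direct limit defining $\ul{HE}$ through zero groups. By Theorem \ref{weldefined}, $\ul{HE}(X,\mathfrak{s},\omega)$ is independent of the choice of admissible exhaustion, so I would fix one such exhaustion $\{X_j\}$; for $j$ sufficiently large $X_j$ contains the compact set on which $\mathfrak{s}$ is undefined, so $\mathfrak{s}$ restricts canonically to $X\setminus X_j$.

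For each such $X_j$, use cofinality of $\{\Omega_i\}$ to pick $i=i(j)$ with $X_j\subset \Omega_{i}$, and then $j'=j'(i)$ with $\Omega_{i}\subset X_{j'}$. The admissible cobordism $X_{j'}\setminus X_j$ splits along $\partial \Omega_{i}$ as
\[
X_{j'}\setminus X_j \;=\; (X_{j'}\setminus \Omega_{i})\cup_{\partial\Omega_i}(\Omega_{i}\setminus X_j).
\]
Because $\mathfrak{s}$ and $\omega$ are defined on all of $X\setminus X_j$, their restrictions to the two pieces glue canonically, and the TQFT composition law identifies
\[
\ul{F}^{+}_{X_{j'}\setminus X_j;\,\omega} \;=\; \ul{F}^{+}_{X_{j'}\setminus \Omega_{i};\,\omega}\circ \ul{F}^{+}_{\Omega_{i}\setminus X_j;\,\omega},
\]
so this composite factors through $\ul{HF}^{+}_{\textrm{red}}(\partial\Omega_{i},\omega|_{\partial\Omega_{i}})$.

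The final ingredient is the vanishing of this intermediate group. By hypothesis each component of $\partial\Omega_{i}$ is an $L$-space, whose reduced twisted Floer homology is zero for \emph{every} coefficient module. To handle a possibly disconnected boundary cleanly, I would work one end of $X$ at a time, matching the per-end formulation of Theorem \ref{thm:end-floer-obstruction}: restrict attention to components of $\partial X_j$ and $\partial\Omega_i$ belonging to a fixed end $e$; for $i,j$ large each $Y_i^e$ is a single connected $L$-space with $\ul{HF}^{+}_{\textrm{red}}(Y_i^e,\omega|_{Y_i^e})=0$. The cofinality factorization above at end $e$ therefore forces the direct limit at that end to vanish, and since $e$ was arbitrary, $\ul{HE}(X,\mathfrak{s},\omega)=0$.

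The main technical obstacle is ensuring that the intermediate cobordism maps are well-defined without assuming that the sub-cobordisms $X_{j'}\setminus \Omega_{i}$ and $\Omega_{i}\setminus X_j$ are themselves admissible in the sense of Remark \ref{admissible cobordism}. This requires using that the asymptotic $\spin^c$ structure $\mathfrak{s}$ provides canonical and compatible extensions on both pieces (no arbitrary choice is needed), together with an examination of the proof of Theorem \ref{weldefined} in Gadgil's paper to extract the compositional TQFT property for this gluing even when the intermediate cobordisms fail to be admissible individually. A secondary technical point is the disjoint-union issue for $\ul{HF}^{+}_{\textrm{red}}$ of $\partial\Omega_i$, which the per-end reduction above is designed to sidestep.
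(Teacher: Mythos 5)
Your proposal is correct and follows essentially the same route as the paper: interleave the admissible exhaustion with the $\Omega_i$'s so that each comparison cobordism contains a separating copy of some $\partial\Omega_l$, factor the twisted cobordism map through $\ul{HF}^+_{\textrm{red}}$ of that $L$-space boundary (which vanishes), and conclude the direct limit is zero. The paper states this more tersely (refining the admissible exhaustion so each cobordism piece contains a boundary-separating $\partial\Omega_l$), while you additionally flag the admissibility and disconnected-boundary technicalities that the paper leaves implicit.
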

\begin{proof}

Suppose $W$ is a cobordism between $Y_1$ and $Y_2$, with $Y'$ smoothly embedded in $W$, serving as an $L-$space that separates the two boundaries $Y_1$ and $Y_2$. Then, observe that the induced twisted cobordism map $\ul{F}^+_{W;\omega} : \ul{HF}_\textrm{red}^+(Y_1, \omega|_{Y_1}) \to \ul{HF}_\textrm{red}^+(Y_2, \omega|_{Y_2})$ factors through $\ul{HF}^+_\textrm{red}(Y',\omega|_{Y'}) =0$. Consequently, the image of $\ul{F}^+_{W;\omega}$ is $0$.

Now consider an admissible exhaustion $X_0\subset X_1\subset X_2\subset \cdots$ for $X$. As $X$ admits an exhaustion by $\Omega_1 \subset \Omega_2\subset\dots$, where each component of $\partial\Omega_i$ is an $L-$space, we can refine the admissible exhaustion to $X_{i_1}\subset X_{i_2}\subset \cdots$ of ${X_i}$'s such that in this refined exhaustion, for each cobordism $X_{i_j}\setminus \mathring{X}_{i_{j-1}}$, $\partial \Omega_l$ embeds smoothly in a boundary-separating manner for some $l$. Hence, the induced cobordism map is trivial, yielding $\ul{HE}(X,\mathfrak{s};\omega)= 0$.
\end{proof}


Now we are ready to prove \cref{thm:compact} which says that a non-compact 4-manifold with non-trivial end Floer homology cannot admit a complete positive scalar metric $R\geq 1$.

\begin{proof}[Proof of \cref{thm:end-floer-obstruction}]

If $X$ admits a complete positive scalar curvature metric $R\geq 1$, then by \cref{prop:gromov-mu-bubble-exhaustion}, it admits an exhaustion $\{\Omega_i\}$ such that $\partial \Omega_i$ admits a positive scalar curvature metric, and hence an L-space. Thus by \cref{thm: vanishing HE}, the end Floer homology satisfies $\ul{HE}(X,\mathfrak{s};\omega)= 0$ for all choice of asymptotic $\spin^c$ structures $\mathfrak{s}$ and 2-forms $\omega$, which is a contradiction. 
\end{proof}
\begin{remark} \label{HF-}
Since there is a natural and canonical isomorphism from $HF^+_\textnormal{red}\to HF^-_\textnormal{red}$, the above \cref{thm:end-floer-obstruction} is valid if we define end Floer homology as a limit of $HF^-$-flavor. 
    
\end{remark}

\section{Exotic smooth structures with no uniformly PSC metric}\label{sec:exotic}
In this section, we will use contact and symplectic geometry to produce many examples of smooth 4-manifolds that do not admit complete metric with uniformly positive scalar curvature.

\subsection{The idea of constructing exotic \texorpdfstring{$\R^4$}{R4}} Begin with a disjoint collection of smooth disks $D$ in $B^4$ whose boundary $\partial D \subset \partial B^4 = S^3$ gives a link. By removing a tubular neighborhood of $D$ from $B^4$ we obtain a 4-manifold  $B'$ whose boundary is the 3-manifold which is obtained from $S^3$ by doing $0$-surgery on the link $\partial D$. If we were to attach 2 handles on $B'$ along the meridians of $D$, we will recover $B^4$. However, if one were to glue instead (Stein) Casson handles \cite{casson, gompf} to these meridians of $D$. The resultant interior will be homeomorphic to $\mathbb{R}^4$ (since Casson handles are homeomorphic to open 2-handles \cite{Freedman}), but it may not be diffeomorphic to $\mathbb{R}^4$. For our purposes, we need to choose certain configurations of handles that can obstruct the existence of a metric with uniformly positive scalar curvature. In \cref{figure:ribbon} we describe a ribbon disk complement $B'$ which is obtained from deleting the standard disk $D$ bounded by Pretzel knot $P(-3,-3,3)$ and the dashed circle in \cref{figure:ribbon} is the meridian of $D$. We observe that with this specific choice, Gompf's Casson handle \cite{gompf} will yield exotic $\R^4$'s for which the existence of a metric with uniformly positive scalar curvature can be obstructed using contact geometry and Heegard Floer homology.

\subsection{Contact and Symplectic Topology} Let us review some fundamental concepts of contact and symplectic geometry that we will need for our constructions later on.

Let $\xi$ denote a contact structure on an oriented 3-manifold (sometimes $\xi$ can also be considered as kernel of a 1-form $\theta$ on $Y$). A knot $K$ contained in $(Y,\xi)$ is termed \textit{Legendrian} if the tangent space $T_p K$ lies within $\xi_p$ for every $p\in K$. In a contact manifold $(Y,\xi)$, a Legendrian knot $K$ possesses a standard neighborhood $N$ and a framing $fr_\xi$ determined by the contact planes. When $K$ is a null-homologous knot, it bounds a Seifert surface in the 3-manifold and the positive normal direction to the surface induces a framing, namely the Seifert framing, and the framing $fr_\xi$ relative to the Seifert framing represents the \textit{Thurston--Bennequin} number of $K$, denoted by $tb(K)$. We call a contact 3-manifold $(Y,\xi)$ \textit{over-twisted} if there exists a Legendrian unknot $K$ with $tb(K)=0$, i.e. $K$ bounds an embedded disk. And $(Y,\xi)$ is called \textit{tight} if it is not an over-twisted contact manifold. Performing a $(fr_\xi-1)$-surgery on $K$, i.e.\ by removing $N$ and attaching a solid torus to realize the desired surgery, results in a unique extension of $\xi|_{Y-N}$ over the surgery torus, maintaining tightness on the surgery torus. The resulting contact manifold is termed obtained from $(Y,\xi)$ by \textit{Legendrian surgery} on $K$. Furthermore, for a knot $K$ in $(S^3, \xi_{\text{std}})$, the {\it maximal Thurston--Bennequin number} is defined as the maximal value among all Thurston--Bennequin numbers for all Legendrian representations of $K$.

\begin{definition}
    
A \textit{symplectic cobordism} from the contact manifold $(Y_-,\xi_-)$ to $(Y_+,\xi_+)$ constitutes a compact symplectic manifold $(W,\omega)$ such that there exists a vector field $v$ near $\partial W$ pointing transversally inwards at $Y_-$ and transversally outwards at $Y_+$, so that the Lie derivative of $\omega$ satisfies $\mathcal{L}_v \omega= \omega$ and $\iota_v \omega |_{Y_\pm}$ represents a contact form of $\xi_\pm$. The boundary $Y_-$ is called \textit{concave} boundary and $Y_+$ is \textit{convex}  If $Y_-$ is empty, $(W,\omega)$ is referred to as a {\it symplectic filling} and if $Y_+$ is empty, then $(W,\omega)$ is called a {\it symplectic cap}.
\end{definition}

Our approach predominantly follows a technique for constructing symplectic cobordisms known as {\it Stein handle attachment} \cite{eliashberg, weinstein91}. This involves attaching 1- or 2-handles to the convex end of a symplectic cobordism to obtain a new symplectic cobordism with the modified convex end as follows: for a 1-handle attachment, the convex boundary undergoes a connected sum, potentially internal. On the other hand, a 2-handle is attached along a Legendrian knot $L$ with framing one less than the contact framing, leading to a Legendrian surgery on the convex boundary.

\begin{theorem}[\cite{eliashberg,weinstein91,eh02}]\label{cob}
Let $(W,\omega)$ be a symplectization of a contact 3-manifold $(Y,\xi=\ker \theta)$ , i.e., $(W= [0,1]\times Y, \omega= d(e^t\theta))$. Let $L$ be a Legendrian knot in $(Y,\xi)$, with $Y$ regarded as $Y\times { 1 }$. If $W'$ is obtained from $W$ by either attaching a 1-handle or attaching a 2-handle along $L$ with framing one less than the contact framing, then the upper boundary $(Y', \xi ')$ remains a convex boundary. Furthermore, if the 2-handle is attached to a symplectic filling of $(Y,\xi)$, then the resulting manifold would be a symplectic filling of $(Y',\xi')$.
\end{theorem}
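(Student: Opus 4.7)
The plan is to construct explicit symplectic handle models and glue them into the cobordism so that both the symplectic form and an associated Liouville vector field $v$ (dual to $\theta$ under $\omega$) extend coherently across the handle. We treat the $1$-handle and $2$-handle cases separately and then deduce the "furthermore" statement by locality near the attaching region.

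For a $1$-handle, the model is a neighborhood of the origin in $(\R^4, \omega_0 = dx_1 \wedge dy_1 + dx_2 \wedge dy_2)$ equipped with the Liouville vector field $v_1 = -x_1 \partial_{x_1} + 2 y_1 \partial_{y_1} + \tfrac{1}{2}(x_2 \partial_{x_2} + y_2 \partial_{y_2})$, whose zero at the origin is hyperbolic of Morse index $1$. The attaching region on the negative end is a disjoint union of two Darboux $3$-balls, and Darboux's theorem allows us to identify these with two disjoint Darboux balls inside $\{1\} \times Y \subset W$. The glued manifold $W'$ inherits an extension of $\omega$ and a Liouville vector field (with one new hyperbolic zero) that remains transverse to the new upper boundary; hence that boundary is convex, and the contact structure induced by contraction of $\omega$ with the Liouville field equals the (possibly self-)connected sum contact structure.

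For a $2$-handle, the model is built on $(\CC^2, \omega_0)$ using a Liouville vector field whose zero at the origin is hyperbolic of Morse index $2$, with stable manifold a Lagrangian disk $D$ bounded by a standard Legendrian unknot $\Lambda$ sitting in the contact boundary of a small ball. The key framing computation shows that the trivialization of the normal bundle of $\Lambda$ induced by the Liouville vector field (equivalently, by a vector transverse to the contact planes) equals the contact framing minus one. Given $L \subset (Y,\xi)$ Legendrian, the Weinstein neighborhood theorem supplies a contactomorphism from a tubular neighborhood of $L$ to a tubular neighborhood of $\Lambda$; we extend this to a symplectomorphism of collar neighborhoods inside $[0,1]\times Y$ and the model cobordism. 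Gluing the $2$-handle along this identification produces a $W'$ whose new upper boundary is obtained from $Y$ by $(tb(L)-1)$-surgery on $L$, and the extended Liouville vector field remains transverse to it, inducing the claimed contact structure $\xi'$.

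The "furthermore" statement is local: if $(X,\omega_X)$ symplectically fills $(Y,\xi)$, a collar neighborhood of $\partial X$ is symplectomorphic to a short symplectization of $(Y,\xi)$, so the same handle gluing applies verbatim and yields a symplectic form on $X'$ with $(Y',\xi')$ as positive contact boundary. The main technical obstacle I expect is the framing computation for the $2$-handle: verifying that the Liouville framing on $\Lambda$ in the model equals the contact framing minus one is what forces the specific attaching framing in the statement and is the geometric heart of the Weinstein construction; all other steps (Darboux matching, Weinstein neighborhood theorem, extension of Liouville fields across a collar) are standard.
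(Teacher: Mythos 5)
The paper does not actually prove this statement: it is quoted from Eliashberg, Weinstein, and Etnyre--Honda \cite{eliashberg,weinstein91,eh02}, so there is no internal proof to compare against. Your outline is the standard Weinstein-handle argument from those sources --- explicit model handles in $(\R^4,\omega_0)$ carrying a Liouville vector field with a hyperbolic zero of index $1$ or $2$, gluing along the attaching region via the contact Darboux and Legendrian standard-neighborhood theorems, extension of the Liouville field across the handle to get convexity of the new boundary, and locality in a collar for the filling statement --- and as a sketch it is essentially correct and is exactly the intended route.

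One caution at the step you yourself call the geometric heart: as written, your framing claim is internally inconsistent. A push-off of a Legendrian knot in a direction transverse to the contact planes (e.g.\ the Reeb direction) gives precisely the contact (Thurston--Bennequin) framing, not the contact framing minus one; what differs from the contact framing by $-1$ is the framing induced by the normal bundle of the Lagrangian core disk of the model handle (equivalently, the product framing with which the smooth $2$-handle is attached), and it is this computation that forces the attaching coefficient $tb(L)-1$. Since you assert this computation rather than carry it out, that is the one place where your sketch still leans on the cited references rather than replacing them; the remaining steps (Darboux matching, Legendrian neighborhood identification, collar extension, and the filling statement) are standard and correctly organized.
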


\begin{remark}
    As in this above Theorem, attaching a 1-handle or 2-handle along a Legendrian knot $L$ with framing one less than the contact framing is called \textit{Stein} or $\textit{Weinstein}$ handle attachment.
\end{remark}

\begin{definition}
    An \emph{admissable Stein surface $(W,\omega)$ with a concave boundary $(Y,\xi)$} is a symplectic $4$-manifold with $(Y,\xi)$ concave in the sense described above, so that $(W,\omega)$ admits an admissible exhaustion $\{W_i\}$ where $W_0 = Y\times [0,1]$, and so that $W_j$ is obtained by attaching a Stein handle of index 1 or 2 on the convex boundary $(\partial W_{j-1}, \xi_{j-1})$.
\end{definition}

 Given a contact 3-manifold $(Y,\xi)$, Ozsváth and Szabó \cite{twistedos-sz} defined a corresponding Heegaard Floer homology invariant, namely the \textit{contact invariant} $\widehat{c}(\xi;\omega)$ as an element of the group $\ul{\widehat{HF}}(-Y,\omega)$. And let us denote $c^+(xi;\omega)$ as the image of $\widehat{c}(\xi; \omega)$ under the natural map $\ul{\widehat{HF}}(-Y,\omega)\to \ul{HF}^+(-Y,\omega)$ and $c^+_\textnormal{red}(\xi;\omega)$ as the image of $c^+(\xi;\omega)$ under the projection map $\ul{HF}^+(-Y,\omega)\to \ul{HF}^+_\textnormal{red}(-Y,\omega)$. 

\begin{theorem} \label{non-trivial HE}
    Let $(Y,\xi)$ be a contact 3-manifold such that reduced contact invariant $c^+_\textnormal{red}(\xi;\omega)\neq 0$ in $\ul{HF}^+_\textnormal{red}(-Y,\omega)$. If $(W,\omega)$ is an admissible Stein surface with a concave boundary $(Y,\xi)$, then for any smooth and compact 4-manifold $X$ with boundary $Y$, the interior of $X\cup_Y W$ doesn't admit a complete positive scalar curvature metric $R\geq 1$.
\end{theorem}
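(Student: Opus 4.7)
The plan is to prove that the end Floer homology of $X\cup_Y W$ is nontrivial for an appropriate asymptotic $\spin^c$ structure and twisting $2$-form, and then invoke Theorem \ref{thm:end-floer-obstruction}. Since the end structure of $X\cup_Y W$ is controlled entirely by the noncompact end of $W$, the compact filling $X$ contributes only to a compact region and plays no role in the direct limit, by the invariance of end Floer under admissible exhaustions (Theorem \ref{weldefined}).

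First I would assemble an admissible exhaustion of $X\cup_Y W$ directly from the Stein handle structure. Fix the admissible exhaustion $W_0 = Y\times [0,1] \subset W_1 \subset W_2 \subset \cdots$ from the definition of admissible Stein surface, in which $W_{i+1}$ is built from $W_i$ by attaching a single Stein handle of index $1$ or $2$. Setting $X_i := X \cup_Y W_i$, the cobordisms $X_{i+1}\setminus X_i = W_{i+1}\setminus W_i$ inherit admissibility, so $\{X_i\}$ is an admissible exhaustion of $X\cup_Y W$. Each boundary $\partial X_i = \partial_+ W_i$ carries the contact structure $\xi_i$ induced by $\omega$, with $\xi_0 = \xi$, and these assemble into an asymptotic $\spin^c$ structure $\mathfrak{s}_\xi$ on $X\cup_Y W$.

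Next I would iterate the Ozsv\'ath--Szab\'o naturality of the contact invariant under Stein cobordisms (in the twisted-coefficient form developed in subsequent work by Ghiggini, Jabuka--Mark, Plamenevskaya, and others) applied to each cobordism $W_{i+1}\setminus W_i$. This naturality relates $c^+(\xi_i;\omega|_{\partial_+ W_i})$ and $c^+(\xi_{i+1};\omega|_{\partial_+ W_{i+1}})$ through the induced cobordism map up to multiplication by a unit, and it descends compatibly to the reduced quotient. Starting from the hypothesis $c^+_\textnormal{red}(\xi;\omega) \neq 0$ and inducting on $i$, each $c^+_\textnormal{red}(\xi_i;\omega|_{\partial_+ W_i})$ is nonzero. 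The compatible family of nonzero classes then determines a nonzero element of the direct limit defining $\ul{HE}(X\cup_Y W,\mathfrak{s}_\xi;\omega)$, and Theorem \ref{thm:end-floer-obstruction} forbids a complete Riemannian metric with $R\geq 1$ on the interior of $X\cup_Y W$.

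The principal technical obstacle is the orientation bookkeeping in the last step: the contact invariant natively lives in $\ul{HF}^+_\textnormal{red}(-\partial X_i,\omega)$, while end Floer is defined as a direct limit of groups on the oppositely oriented boundaries, and the direction in which Ozsv\'ath--Szab\'o naturality acts on contact invariants runs opposite to the direct-limit direction. The cleanest reconciliation is to pass to the $HF^-$ formulation of end Floer (Remark \ref{HF-}), under which the two directions agree; alternatively one can dualize via the $\pm$-flavor pairing on opposite orientations. Either way, one must also confirm that the $\omega$-twisted version of contact invariant naturality applies to both Stein $1$- and $2$-handle attachments, which follows from the extensions cited above.
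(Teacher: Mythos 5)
There is a genuine gap at the crucial final step of your argument. The end Floer homology is, by definition, the \emph{direct} limit of the groups $\ul{HF}^+_\textnormal{red}(Y_i,\omega|_{Y_i})$ (boundaries with their induced orientations) along the maps induced by the outward cobordisms $X_{i+1}\setminus \mathring X_i$. The contact classes $c^+_\textnormal{red}(\xi_i;\omega)$ live in $\ul{HF}^+_\textnormal{red}(-Y_i,\omega)$, and Ozsv\'ath--Szab\'o naturality makes them compatible under the \emph{upside-down} cobordism maps $\ul{F}^+_{-W_i}:\ul{HF}^+_\textnormal{red}(-Y_{i+1})\to\ul{HF}^+_\textnormal{red}(-Y_i)$, i.e.\ under an inverse system running against the colimit. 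A family compatible in that direction does not ``determine a nonzero element of the direct limit''; nontriviality of a colimit requires a class whose \emph{forward} images never vanish, which the contact invariants do not supply. Your proposed repairs do not close this: passing to the $HF^-$ flavor (\cref{HF-}) changes the flavor of the groups but not the orientation of the $Y_i$'s nor the direction of the cobordism maps in the limit, so the mismatch persists; and the alternative of ``dualizing via the $\pm$-flavor pairing'' would require setting up a duality between $\ul{HF}^+_\textnormal{red}(-Y)$ and the groups appearing in the colimit that is compatible with the $\omega$-twisted cobordism maps (so that the compatible family of contact classes becomes a nonzero functional on the colimit) — a nontrivial statement with Novikov/twisted coefficients that you neither prove nor cite in usable form.

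The paper avoids this issue entirely by not going through end Floer homology or \cref{thm:end-floer-obstruction}. Its proof is a direct interleaving argument: if the interior of $X\cup_Y W$ had a complete metric with $R\geq 1$, \cref{exhaustion} gives a PSC exhaustion $\{\Omega_i\}$ whose boundaries are L-spaces; by compactness one can sandwich $X_1\subset \Omega_{k'}\subset X_k$ between terms of the Stein exhaustion $X_i=X\cup_Y W_i$. The upside-down cobordism map $\ul{F}^+_{-W_k,\mathfrak s_k;\omega}$ is nonzero because it carries $c^+_\textnormal{red}(\xi_k;\omega)$ to $c^+_\textnormal{red}(\xi;\omega)\neq 0$, yet by the composition law it factors through $\ul{HF}^+_\textnormal{red}(-\partial\Omega_{k'},\omega)=0$ — a contradiction. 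Note that this uses the contact-invariant naturality exactly in the direction it naturally goes (outer to inner, on the $-Y_i$'s), needs only a single nonvanishing composite rather than a limit, and requires neither the invariance theorem \cref{weldefined} nor any duality bookkeeping. If you want to salvage your route, you would have to carry out the dualization argument rigorously; as written, the step from nonvanishing contact classes to nontrivial $\ul{HE}$ does not follow.
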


\begin{proof}[Proof of \cref{non-trivial HE}]

We start with an admissible Stein exhaustion $W_0\subset W_1\subset \cdots$ of $W$. Let $\mathfrak s$ represent the asymptotic $\spin^c$ structure on $W$ corresponding to the symplectic 2-form $\omega$. By construction, $\mathfrak s$ restricts to a unique $\spin^c$ structure $\mathfrak s_i$ on $W_i$, which coincides with the Stein cobordism structure on $W_i$. Denoted by $(Y_i,\xi_i)$ the convex boundary of the Stein cobordism $W_i$, we have a cobordism map $\ul{F}^+_{-W_i,\mathfrak s_i;\omega} : \ul{HF}^+_\textnormal{red} (-Y_i, \omega|_{-Y_i}) \to \ul{HF}^+_\textnormal{red} (-Y, \omega|_{-Y})$ such that $F^+_{-W_i,\mathfrak s_i;\omega} (c^+_\textnormal{red}(\xi_i;\omega)) = c^+_\textnormal{red}(\xi;\omega) \neq 0$ \cite{twistedos-sz} where $-W_i$ is consider the upside-down cobordism from $-Y_i$ to $-Y$. 

Now if the interior of $X\cup_Y W$ admits a complete metric with uniformly positive scalar curvature $R\geq 1$, then \cref{exhaustion} implies that there exists an exhaustion $\Omega_1 \subset \Omega_2\subset\dots$ of $X\cup_Y W$ so that each component of $\partial\Omega_i$ is of the form $(\#_j S^3/\Gamma_j) \# (\#_j S^2 \times S^1)$. Moreover, we have $X_i= X\cup_Y W_i$ an exhaustion of $X\cup_Y W$. So by compactness argument, there will exist an integer $k$ and $k'$ such that $X_1 \subset \Omega_{k'} \subset X_{k}$. Thus the map $\ul{F}^+_{-W_k,\mathfrak s_k;\omega}: \ul{HF}^+_\textrm{red} (-Y_k, \omega|_{-Y_k}) \to \ul{HF}^+_\textrm{red} (-Y, \omega|_{-Y})$ factors through $\ul{HF}^+_\textnormal{red} (-\partial\Omega_{k'}, \omega|_{\partial\Omega_{k'}}) = 0$. But $\ul{F}^+_{-W_k,\mathfrak s_k;\omega} (c^+_\textrm{red}(\xi_k;\omega)) = c^+_\textrm{red}(\xi;\omega) \neq 0$, which is a contradiction.
\end{proof}

\subsection{Constructing an admissible Stein Casson Handle}\label{admissible casson handle}

Now, we will elaborate on the process of constructing a Stein-admissible Casson handle, derived from Gompf's method outlined in \cite[Section 3, Proof of Theorem 3.1]{gompf}. Consider $H$, a 4-manifold comprising one 0-handle, one 1-handle, and one 2-handle, denoted as $H=h^0\cup h^1_0\cup h_0^2$. Eliashberg demonstrated \cite{eliashberg} that if the Thurston--Bennequin number of the attaching 2-handle exceeds the smooth framing of its attaching sphere, the manifold admits a Stein structure. However, this inequality might not always hold. Bearing this in mind, we can construct the handle structure of $H$ as follows: Let $X_1= H_1 \cup h_1^1\cup \cdots \cup h_k^1$ for some $k>0$, where $H_1$ represents the 1-skeleton of $H$, to which a few new 1-handles are appended. Then, attach a 2-handle $h_1^2$ to $X_1$, with its attaching sphere passing over $h^1_0$ identical to that of $h_0^2$ but in addition to that this will twist the remaining newly attached 1-handles $h_k^1$, as illustrated in \cref{figure:kinky-handle}, with the smooth framing of $h_1^2$ matching that of $h_0^2$. This process increases the Thurston--Bennequin number of the attaching sphere of $h_1^2$ by $2k$ (bottom picture of \cref{figure:kinky-handle}).

\begin{figure}[htbp]
\centering
\begin{overpic}[scale=.9,  tics=20]{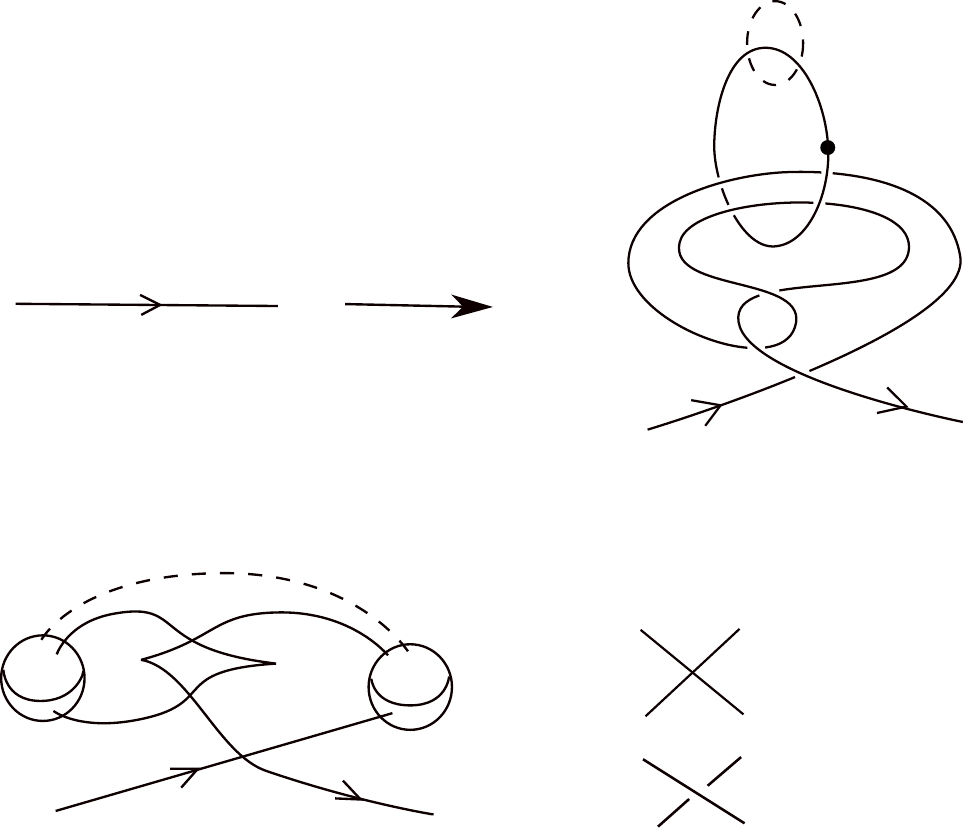}
    \put(50,200) {$h_0^2$}
    \put(370,300) {$h_1^1$}
    \put(420,220) {$h_1^2$}
    \put(140,210) {\tiny{modifying the attaching}} \put(140,195) {\tiny{circle of 2-handle}}
\end{overpic}

\caption{On the top figure we are showing how to modify the attaching 2-handle $h_0^2$ by passing over the new 1-handle $h_1^1$ that increases the Thurston--Bennequin number as shown in the bottom left figure where we can see a Legendrian representation of the new 2-handle $h_1^2$ (where each crossing are as indicated in the right bottom picture).}
\label{figure:kinky-handle}
\end{figure}

By selecting an appropriate $k$, we ensure $X_1$ becomes Stein. This modification of the original 2-handle $h_0^2$ into the new attaching 2-handle $h_1^2$ is referred to as a kinky-handle. Subsequently, attaching $k$ new 0-framed 2-handles along the meridian of the dotted 1-handles $h_j^1$ for $j>0$ results in a 4-manifold diffeomorphic to $H$. However, the attaching spheres of these 2-handles will have Thurston--Bennequin numbers equal to $0$, aligning with the smooth framing. To establish a Stein structure, we can repeat the above process for each pair of handles by introducing new kinky 2-handles such that the interior admits the Stein structure. This construction can be iterated, adding a third layer of handles onto the second layer, and so forth, generating a manifold $X$ with infinitely many handles, inherently possessing a Stein interior. Note that in this infinite iteration process cancel all the 1-handles with some 2-handles and thus it is simply-connected at infinity and there is no new homology can arise. Additionally, each layer of handle attachments satisfies the conditions of \cref{admissible cobordism}. The work of Casson \cite{casson} and Freedman \cite{Freedman} demonstrated that the aforementioned handle attachment method gives rise to Casson handles, and the resultant 4-manifold $X$ is homeomorphic to the interior of $H$.

\subsection{Proofs} 
We can now construct uncountably many smooth structures on $\R^4$ that don't admit complete metrics with uniformly positive scalar curvature. The basic construction of such exotic $\R^4$ is motivated by the work of Gadgil \cite{gadgil2009open}.
\begin{proof}[Proof of \cref{uncountable R^4}]
Our goal is to construct an exotic $\R^4$ that decomposes as in \cref{non-trivial HE}, namely as $X\cup_Y W$ for $(W,\omega)$ an admissible Stein surface with concave boundary $(Y,\xi)$ so that the reduced contact invariant satisfies $c^+_\textnormal{red}(\xi;\omega)\neq 0$ in $\ul{HF}^+_\textnormal{red}(-Y,\omega)$. 

We begin by constructing $Y$. Let $K$ denote the non-trivial Pretzel slice knot $P(-3,-3,3)$  in $S^3 = \partial B^4$ and $Y= S^3_0(K)$ be the 3-manifold resulting from 0-surgery on $K$. If $\mu$ represents the meridian of $K$ in $S^3$, we observe that $[\mu]$ normally generates $\pi_1(Y)$. Now let us examine the cobordism $X_0$ obtained by attaching a 0-framed 2-handle on $Y\times [0,1]$ along $\mu \subset Y\times \{1\}$. Then $X_0$ is a cobordism from $Y$ to $S^3$. Gabai proved that \cite{gabai} the constructed 3-manifold $Y$ admits a taut foliation. 

We now prove the nontriviality of the reduced contact invariant. Since $Y$ can be equipped with a taut foliation, Eliashberg--Thurston showed that \cite{eliashberg-thurston} after a perturbation of this foliation, it induces a contact structure $\xi$ on $Y$ and moreover there exists a symplectic structure $\omega$ on $Y\times I$ such that both boundary component are convex. Now by capping off one of the boundary components by a symplectic cap as constructed in \cite{mukherjee19} we can construct a symplectic filling of $(Y,\xi)$ with $b^+ >0.$ Thus by \cite[Theorem 4.2]{twistedos-sz} we see that $(Y,\xi)$ is a contact 3-manifold such that reduced contact invariant $c^+_\textnormal{red}(\xi;\omega)\neq 0$ in $\ul{HF}^+_\textnormal{red}(-Y,\omega)$.

Now we will construct our desired compact 4-manifold $X$ with boundary $Y$ as follows; let $D$ be a slice disk bounded by $K$ in $B^4$. Since $H_2(B^4\Z)=0$, we know that the algebraic self-intersection number of $D$ is zero. Thus the boundary of the $4-$manifold $X = B^4 \sm \nu(D)$ is diffeomorphic to 0-framed Dehn surgery along the knot $K$. So we have our desired compact 4-manifold $X$ with boundary $Y$. See \cref{figure:ribbon} for a Kirby-diagram of $X$. 

\begin{figure}[ht]
\centering

\begin{overpic}[scale=.9,  tics=20]{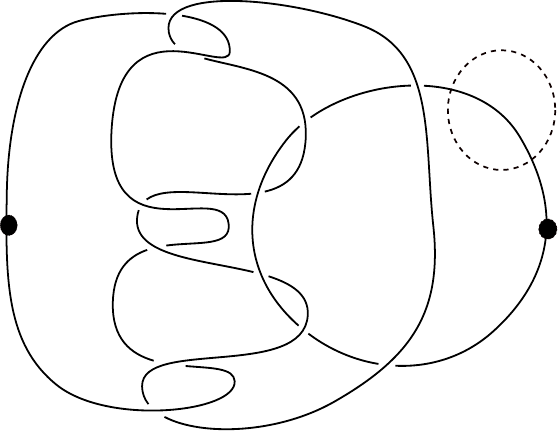}
    \put(160,180){0}
\end{overpic}

\caption{The above descrubed a Kirby picture of the 4-manifold obtained by deleting a neighborhood of a slice disk bounded by the Pretzel knot $P(-3,-3,3)$. And the dashed-circle denoted the meridian of the Pretzel knot.}
\label{figure:ribbon}
\end{figure}

Now let us focus on the open 4-manifold $R = B'\cup W_0$ where $W_0 = X_0\sm S^3$. Gompf showed in \cite[Theorem 3.4]{gompf} that by replacing the open 2-handle of $W_0$ with Stein Casson handles (attaching them along the dashed circle in \cref{figure:ribbon}), one can construct uncountable many non-diffeomorphic 4-manifolds $R_c = B'\cup CH_c$, indexed by Cantor set (compare with \cite{df,BZ}). These $R_c$'s are seen to have trivial homology in all dimensions. Furthermore, $\pi_1(R_c)= 1$, and $R_c$ is simply-connected at infinity. According to Freedman \cite{Freedman}, $R_c$ is thus homeomorphic to $\mathbb{R}^4$. Since $CH_c$ is a Stein Casson handle, by the result of Eliashberg, it has a handle decomposition $\{h^i_c\}$ with 1- and 2-handles. 
Moreover, we can arrange the handles $h_c^i$ (as explained in \cref{admissible casson handle}) to satisfy \cref{admissible cobordism}. Therefore we have an admissible cobordism $\{X_c^i\}$ of $R_c$, where $X_c^0= X$, and $X_c^i$ obtained from $X_c^{i-1}$ by attaching a Stein 1- or 2-handle $h_c^i$ along $\partial X_c^{i-1}$. Let $\omega$ be the symplectic 2-form of $CH_c$ obtained by the Stein structure. Hence if we consider $W= CH_c$ then by \cref{non-trivial HE}, $R_c$ doesn't admit a complete positive scalar curvature metric $R\geq 1$.
\end{proof}

Similarly, we can show that if $W$ is any smooth $4$-manifold with boundary then the interior of $W$ admits a smooth structure with no complete Riemannian metric with uniformly positive scalar curvature. 
\begin{proof}[Proof of Theorem \ref{thm:compact}]
In \cite{gadgil2009open}, Gadgil constructed an exotic $\mathbb{R}^4$ with non-trivial end Floer homology. Although Gadgil used the $HF^+$ version of the Heegaard Floer homology, but the same argument of the proof is valid on $HF^-$ since there is a natural and canonical map between $HF^+_\textnormal{red}\to HF^-_\textnormal{red}$, specifically, he showed that $R$ admits an admissible exhaustion $R_0 \subset R_1 \subset R_2 \cdots$ and an asymptotic $spin^c$ structure $\mathfrak{s}$ such that $\operatorname{Lim}_{i\to \infty}\ul{F}^-_{R_i,\mathfrak{s_i}}(x_i) \neq 0$ where $x_{k+1}:= \ul{F}^-_{R_k,\mathfrak{s}_k}(x_k) $ defined inductively for some fix $x_1\in \ul{HF}^-_\textnormal{red}(\partial R_1; \mathfrak{s}_1)$. To construct our candidate for a smooth structure on the interior of $W$, we will perform an end sum operation between the interior of $W$ and $R$ as follows. Consider the interior of $W$ as $W \cup Y \times [1, \infty)$ and fix an arc $\gamma_1 = {y} \times [0, \infty) \subset Y \times [1, \infty) \subset \mathring{W}$. On the other hand, pick a simple arc $\gamma_2: [1, \infty) \to R$ such that the sub-arc $\gamma_2([i, i+1]) \subset R_i$ connects the two boundary components of $R_i$. Now, construct a new open 4-manifold $W' = \mathring{W} \setminus \mathrm{nbd}(\gamma_1) \cup R \setminus \mathrm{nbd}(\gamma_2)$ (this operation is known as the end sum operation). Note that $W'$ has the same homeomorphism type as $\mathring{W}$. However, by construction, $W'$ admits an admissible cobordism $W'_0 \subset W'_1 \subset W'_2 \cdots$ where $W'_i = Y \times [i, i+1] \setminus \mathrm{nbd}(\gamma_1([i, i+1])) \cup R_i \setminus \mathrm{nbd}(\gamma_2([i, i+1]))$. We can think of $W_i'$ as a cobordism from $Y\# \partial R_i$ to $Y\# \partial R_{i+1}$. Now by Kunneth formula for connected sum \cite[Theorem 1.5]{OzsvathSzabo03}, we know that $\ul{HF}^-(Y_1\# Y_2, \mathfrak{s}_1\#\mathfrak{s}_2)\otimes_{\Z} \Q = \ul{HF}^-(Y_1,\mathfrak{s}_1)\otimes_{\Z[U]} \ul{HF}^-(Y_2,\mathfrak{s_2})\otimes_{\Z} \Q$. Let us fix a $spin^c$ structure $\mathfrak{s_0}$ on $Y\times [1,\infty)$ and since the cobordism $Y\times [i,i+1]$ is trivial, the induced cobordism map by $W_i'$, $\ul{F}^-_{W_i',\mathfrak{s_0}\# \mathfrak{s_i}} : \ul{HF}^-(Y,\mathfrak{s_0})\otimes_{\Z[U]} \ul{HF}^-(\partial R_i,\mathfrak{s_i})\otimes_{\Z} \Q \to \ul{HF}^-(Y,\mathfrak{s_0})\otimes_{\Z[U]} \ul{HF}^-(\partial R_{i+1},\mathfrak{s_i})\otimes_{\Z} \Q$ acts as $\ul{F}^-_{W_i',\mathfrak{s}\# \mathfrak{s_i}}(y\otimes x_i\otimes q)= (y\otimes \ul{F}^-_{R_i, \mathfrak{s_i}}(x_i)\otimes q).$ Now note that if $y$ is a generator of the tower of $\ul{HF}^-(Y,\mathfrak{s_0})$ and $x\in \ul{HF}^-_\textnormal{red}(Y_i,\mathfrak{s_i})$, then $(y,x,q)$ is an element of $\ul{HF}^-_\textnormal{red}(Y\# Y_i, \mathfrak{s}_0\#\mathfrak{s}_i)$. And thus the end Floer homology, $\operatorname{Lim}_{i\to \infty} \ul{F}^-_{W_i',\mathfrak{s_0}\# \mathfrak{s_i}}(y\otimes x_i\otimes q)\neq 0$. So by \cref{HF-} and \cref{thm:end-floer-obstruction}, we can conclude that $W'$ does not admit uniformly positive scalar curvature metric. 
\end{proof}

\appendix
\section{Overview of scalar curvature}\label{app:survey}

This appendix contains a brief survey of the topological study of scalar curvature. Many other more comprehensive surveys exist, including \cite{Rosen06,Gromov23,Chodosh21}.

 The scalar curvature of a Riemannian manifold $(M^n,g)$ is a function $R:M \rightarrow \mathbb{R}$ such that for every point $p\in M$, the volume of balls has the following infinitesimal expansion:
\begin{equation}\label{eq:defi-scalar-curv}
\vol_M (B_\varepsilon(p))= \vol_{\mathbb {R}^n}(B_\varepsilon(0))\left(1-\frac{R(p)}{6(n+2)}\varepsilon^2+O(\varepsilon^2)\right).
\end{equation}
This is the weakest curvature invariant one can define on a manifold and it is well-known to be very flexible in some ways but rigid in others. On any smooth manifold, any function that is negative at some point can be prescribed as the scalar curvature of some metric (c.f. \cite[Theorem 0.1]{Rosen06}) but, remarkably, there are global obstructions a manifold must satisfy to carry a metric with positive scalar curvature.

\subsection{Obstructions} All known obstructions arise from one of three methods. The first, discovered by Lichnerowicz \cite{Lich63}, employs the Atiyah--Singer index theorem for the Dirac operator. He demonstrated that any closed spin manifold with a non-vanishing $\hat{A}$-genus cannot support a metric of positive scalar curvature. This, for example, implies that the $K3$ surface cannot accommodate such a metric. A refinement by Hitchin \cite{Hit74} further rules out the existence of positive scalar curvature metrics on certain exotic spheres of dimensions $8k+1,8k+2$ for all $k\geq 1$. For a generalization see \cite{GrLa83}.

 The next obstruction, due to Schoen--Yau \cite{ScYa79a}, uses the second variation of the area of minimal hypersurfaces. They discovered it while investigating the positive mass conjecture in general relativity \cite{PMT}. Their method works for manifolds of dimension $3\leq n\leq 7$. It obstructs positive scalar curvature on manifolds satisfying a certain condition on their integral cohomology ring, a class that includes closed manifolds admitting non-zero degree maps to a torus.

Finally, in dimension 4, Witten \cite{Wit94} demonstrated that closed manifolds $M$ with positive scalar curvature and a second Betti number $b^{+}_2(M) > 1$ must have vanishing Seiberg-Witten invariants. However, as shown by Taubes \cite{Taubes94}, this is not possible if such manifolds admit a symplectic structure, thereby obstructing the existence of positive scalar curvature metrics on a significant number of closed smooth 4-manifolds. Furthermore, for 4-manifolds with periodic ends, various generalizations of Seiberg-Witten theory also obstruct the existence of positive scalar curvature metrics \cite{KM,KTJtop,Lin,LRN}. 

\subsubsection{$\mu$-bubbles} Recently, Gromov introduced  \cite{Gromov23} a new idea that generalizes the minimal hypersurface technique to other situations (cf.\ Section \ref{sec:mu-bubble}) by localizing the minimizer via distance function. See also \cite{Zhu:rigidity,Zhu:width,zhu2021,Zhu:calabi,Rade23}. 

Using the $\mu$-bubble technique, Chodosh--Li proved that closed aspherical $n$-manifold
does not admit metric with positive scalar curvature for $n=4,5$. The $n=5$ was independently obtained by Gromov \cite{Gromov20}. Other obstructions using incompressible surfaces were obtained by \cite{zhu2021, CRZ23}. We also note that similar localizations have been obtained for spinors by various authors \cite{Z:band,Z:w,GXY,Cecchini:longneck,CZ:mububspin}. 

\subsection{(Partial) classification results} 
In $n=3$ dimensions, the minimal hypersurface technique can be used to impose strong restrictions on the fundamental group of closed manifolds with positive scalar curvature \cite{ScYa79b}. After Perelman solved the Poincar\'{e} conjecture \cite{perelman}, a complete classification of closed 3-manifolds with positive scalar curvature up to diffeomorphism could be obtained (cf. Proposition \ref{prop:dominate3d}).

In dimensions $n\geq 5$, the problem is also understood for simply connected manifolds: Gromov-Lawson \cite{GL80} showed that every closed simply connected non-spin manifold of dimension $n\geq5$ admits a positive scalar curvature metric, and Stolz \cite{Stolz} showed that spin manifolds can only admit such metrics if their $\alpha$-index invariant vanishes.

Also using $\mu$-bubbles, Chodosh--Li--Liokumovich \cite{CCL23} showed that if $M$ is a closed manifold of dimension $n = 4$
(resp. $n = 5$) with $\pi_2(M) = 0$ (resp. $\pi_2(M) = \pi_3(M) = 0$) that admits a
metric of positive scalar curvature, then a finite cover $\widehat M$ of $M$ is homotopy equivalent to $S^n$ or connected sums of $S^{n-1}\times S^1$. In a different vein, Bamler--Li--Mantoulidis showed that if $M$ is a closed smooth $4$-manifold that admits a metric of positive scalar curvature then it can be obtained by performing $0$- and $1$-surgeries on a disjoint union of PSC orbifolds with first Betti number $b_1=0$ \cite{BamlerLiMant}.

  \section{Perturbed Heegaard Floer homology}\label{perturbedHF}
Recall that the Novikov ring over $\F_2$ is a set of formal series 
    \[
        \Lambda = \left\{\sum_{x\in\R}n_x z^x : n_x\in\F_2\right\} 
    \] 
    where the set 
    \[
        \{x \in (-\infty, c] : n_x\neq0\}
    \] 
    is finite for every $c\in\R$. One may easily check that this is a field under the obvious operations. 

Let $Y$ be a closed $3$--manifold equipped with a closed $2$-form $\omega\in\Omega^2(Y)$. Then there exists an action of a group ring $\F_2[H^1(Y)] \cong \F_2[H_2(Y)]$ on $\Lambda$, induced by $\omega$, which is defined as follows. For $a \in H_2(Y)$,

\[
    e^a \cdot z^x = z^{x+\int_a\omega}.
\] 

The naturality of perturbed Heegaard Floer homology is conveniently described by projective transitive systems, which were first introduced by Baldwin and Sivek in \cite{BS15}.

 Recall that Heegaard Floer homology of a 3-manifold $Y$ with a $\spin^c$ structure is a of $\F_2[U]$-modules $HF^\circ(Y,\frs)$ for $\circ\in\{\infty,+,-,\wedge\}$, which fit into the following two long exact sequence
\[
    \cdots\xrightarrow{\tau}HF^-(Y,\frs)\xrightarrow{\iota} HF^\infty(Y,\frs)\xrightarrow{\pi} HF^+(Y,\frs)\xrightarrow{\tau} HF^-(Y,\frs)\rightarrow\cdots
\]

Recall that associated to this long exact sequence there is another 3-manifold invariant 
 \[
 HF^+_\textnormal{red}(Y,\mathfrak{s})= Coker(\pi) \cong Ker(\iota) = HF^-_\textnormal{red}(Y,\mathfrak{s})
 \]
 The isomorphism in the middle is induced by the co-boundary map. Recall that $d(Y,\mathfrak{s})$ is the minimum grading of the torsion-free elements in the image of $\{\pi : HF^{\infty}(Y,\mathfrak{s})\rightarrow HF^+(Y,\mathfrak{s})\}$.

In \cite{OS04a}, Ozsv\'{a}th--Szab\'{o} introduced Heegaard Floer homology perturbed by a second real cohomology class, which is more thoroughly discussed in \cite{JM08}. For completeness we will discuss the construction now- Let $\omega\in\Omega^2(Y)$ be a closed 2-from on $Y$ and $\cH = (\Sigma,\alpha,\beta,w)$ an $\frs$-admissible pointed Heegaard diagram of $Y$. Denote the two handlebodies determined by $\cH$ by $H_{\alpha}$ and $H_{\beta}$ respectively and let $D_{\alpha}$ and $D_{\beta}$ be sets of compressing disks of $H_{\alpha}$ and $H_{\beta}$, respectively, such that $D_{\alpha}$ intersects $\Sigma$ along $\alpha$ and $D_{\beta}$ intersects $\Sigma$ along $\beta$. Note that $\phi\in \pi_2(x,y)$ determines a 2--chain $\cD(\phi)$ on $\Sigma$ with boundary a union the loops in $\alpha\cup \beta$. One may cone these loops in the compressing disks $D_{\alpha}$ and $D_{\beta}$ to obtain a 2--chain $\widetilde{D}(\phi)$. Now we define
\[
    A_\omega(\phi):=\int_{\widetilde{D}(\phi)}\omega.
\]

Consider a chain complex $CF^\infty(\cH,\frs;\omega)$ which is a free $\Lambda$-module generated by $U^ix$ for $x\in T_\alpha\cap T_\beta$, such that the $\spin^c$ structure associated to $x$ is $\frs$, i.e. $\frs(x)=\frs$. The differential is defined as follows.

\[
    \partial^\infty(U^ix) = \sum_{y\in T_\alpha\cap T_\beta}\sum_{\substack{\phi\in\pi_2(x,y) \\ \mu(\phi)=1}}\#(\cM(\phi)/\R)\cdot z^{A_\omega(\phi)}\cdot U^{i-n_w(\phi)}y \quad \text{(mod 2)},
\]
where $n_w(\phi)$ is the algebraic intersection number between $\phi\in\pi_2(x,y)$ and $\{w\}\times \Sym^{g-1}(\Sigma)$ and $\cM(\phi)$ is the space of $J$-holomorphic disks in the homotopy class $\phi$ for $J \in \mathcal{J}$, where $\mathcal{J}$ is a generic family of almost complex structures on $\Sym^g(\Sigma)$. For $\circ\in\{+,-,\wedge\}$, $\partial^\circ$ is induced from $\partial^\infty$ in the usual way and $(\partial^\circ)^2=0$ as in the original Heegaard Floer homology. Now we define 
\[
    HF^\circ(Y,\frs;{\omega}):=H_*(CF^\circ(\cH,\frs;\omega),\partial^\circ).
\]
The homology $HF^\circ(Y,\frs;{\omega})$ depends on the choice of $\cH$ and $J$, but Juh\'{a}sz and Zemke in \cite{JZ18} proved the well defineness.

We have the following functoriality in Heegaard Floer homology groups induced by cobordisms. 

\begin{theorem}[Ozsv\'ath--Szab\'o~\protect{\cite[Section~3.1]{twistedos-sz}}]
    Let $W$ be a cobordism from $Y_1$ to $Y_2$. Suppose $\omega$ is a closed $2$-form on $W$ and $\frs\in \spin^c(W)$ is a $\spin^c$ structure on $W$. Then the cobordism map
    \[
        F^\circ_{W,\frs;\omega}\colon HF^\circ(Y_1,\frs|_{Y_1};{\omega|_{Y_1}})\to HF^\circ(Y_2,\frs|_{Y_2};{\omega|_{Y_2}})
    \]
    is well-defined up to overall multiplication by $z^x$ for $x\in\R$.
\end{theorem}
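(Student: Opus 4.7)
The plan is to construct $F^\circ_{W,\frs;\omega}$ by mimicking the unperturbed cobordism construction of Ozsv\'ath--Szab\'o but carrying an $\omega$-weight at every step, then to verify invariance under the moves that relate different handle presentations. First I would fix a self-indexing Morse function $f\colon W \to [0,3]$ with $f^{-1}(0)=Y_1$, $f^{-1}(3)=Y_2$, and standard rearrangement, which cuts $W$ into cobordisms $W_1,W_2,W_3$ consisting only of index-$1$, $2$, $3$ handles respectively. It suffices to define an $\omega$-weighted map for each elementary cobordism and then compose. For $W_1$ (and dually $W_3$) I would take the map induced by a stabilization of the Heegaard diagram together with a preferred top-graded generator on the new $S^1\times S^2$ summand; since no disk classes enter these maps, the perturbation factor $z^{A_{\omega}}$ is trivial and the map is exactly the unperturbed one, tensored with $\Lambda$.

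For the 2-handle piece $W_2$, let $(\Sigma,\alpha,\beta,\gamma,w)$ be a subordinate Heegaard triple in which $\gamma$ differs from $\beta$ by handle slides realizing the framed surgery. For a triangle class $\psi\in\pi_2(x,y,z)$ let $\widetilde{\cD}(\psi)$ denote the 2-chain obtained by coning the domain $\cD(\psi)\subset\Sigma$ into the three handlebodies using fixed systems of compressing disks, extended across the triple to a $2$-cycle representing the cobordism class. Set $A_{\omega}(\psi):=\int_{\widetilde{\cD}(\psi)}\omega$ and define
\[
F^{\circ}_{W_2,\frs;\omega}(x)=\sum_{y}\sum_{\substack{\psi\in\pi_2(x,\Theta,y)\\ \mu(\psi)=0,\,\frs_w(\psi)=\frs|_{W_2}}}\#\cM(\psi)\cdot z^{A_\omega(\psi)}U^{n_w(\psi)}\,y,
\]
where $\Theta$ is the top generator of $HF^{\leq0}(\#^k S^1\times S^2)$. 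The $\omega$-weighted chain map property follows from the unperturbed count together with the additivity $A_\omega(\psi_1*\psi_2)=A_\omega(\psi_1)+A_\omega(\psi_2)$ along broken triangles; this in turn relies on the facts used for $\partial^\circ$ to make sense of $CF^\circ(\cH,\frs;\omega)$.

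Next I would check invariance under the standard moves relating different subordinate Heegaard triples: isotopies and handle slides of the curve systems, stabilizations, and the choice of intermediate attaching curves between $\beta$ and $\gamma$. Each move is handled via an associativity/square diagram counting holomorphic quadrilaterals; the new input is to choose the $\omega$-cone extensions of quadrilateral domains so that under degeneration into a pair of triangles the $A_\omega$ values are additive. Because any two such systems of cone extensions differ on each domain class by a closed $2$-cycle in $W$, the resulting weights agree up to an \emph{overall} additive constant $x\in\R$. The same consideration handles the passage from one handle decomposition of $W$ to another via Cerf moves (births/deaths and slides of critical points), since each Cerf move reduces to a combination of the Heegaard diagram moves above at the level of the induced triples.

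The main obstacle, and also the origin of the $z^x$ ambiguity asserted in the theorem, is this step: controlling how the $\omega$-weights depend on the auxiliary cone/filling data used to define $A_\omega$ on 2-, 3-, and 4-chain domains. Two sets of choices for these cones differ by absolute $2$-cycles in $W$ whose $\omega$-integrals are fixed real numbers independent of the generator or homotopy class, so they shift the entire map by a uniform factor $z^x$ and cannot be eliminated canonically (since $\omega$ need not have integer periods). Once this uniform shift is identified as the only ambiguity, composition of the handle maps yields $F^\circ_{W,\frs;\omega}$ well-defined up to $z^x$ as claimed; no further obstructions arise since, as in the unperturbed theory, the maps are chain maps by a degeneration argument for 1-parameter families of holomorphic triangles, and the $\omega$-weights are continuous in the parameter.
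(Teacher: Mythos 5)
This theorem is not proved in the paper at all: it is quoted verbatim from Ozsv\'ath--Szab\'o (cited as Section~3.1 of the twisted-coefficients paper), with the surrounding appendix deferring the construction of the perturbed complexes to Ozsv\'ath--Szab\'o and Jabuka--Mark and the well-definedness/naturality issues to Juh\'asz--Zemke and the projective transitive systems of Baldwin--Sivek. So there is no in-paper argument to compare against; what you have written is a reconstruction of the standard Ozsv\'ath--Szab\'o construction (handle decomposition, $\omega$-weighted triangle counts for $2$-handles, stabilization maps for $1$- and $3$-handles, invariance via associativity of quadrilateral counts and Cerf moves), and at that level it is the right outline.

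One step is over-simplified, and it is exactly the step the cited references are careful about. You assert that two choices of coning/filling data change $A_\omega$ by the integral of $\omega$ over absolute $2$-cycles ``independent of the generator or homotopy class,'' so that the whole map shifts by a single $z^x$. That is not literally true: changing the compressing-disk systems changes $\widetilde{\cD}(\psi)$ by cycles whose $\omega$-periods depend on the multiplicities of the domain, hence on $\psi$ and on the generators at its corners. The correct mechanism is that these discrepancies can be absorbed by rescaling each generator $x\mapsto z^{c(x)}x$, producing an isomorphism of the perturbed complexes that is canonical only up to an overall unit $z^x$; this is precisely why the theory is packaged as a projective transitive system (Baldwin--Sivek, Juh\'asz--Zemke) rather than as honest modules and maps. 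With that correction your argument matches the literature; without it, the claimed uniformity is false as stated even though the conclusion (well-definedness up to overall multiplication by $z^x$) is the right one.
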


Consider $\omega\in\Omega^2(W,\partial{W})$ which is a closed $2$-form on $W$ compactly supported in the interior of $W$. We will say such an $\omega$ is a $2$-form on $(W,\partial{W})$. Let $W$ be a cobordism from $Y_0$ to $Y_1$ equipped with a closed $2$-form $\omega$ and $\frS\subseteq \spin^c(W)$ a subset of $\spin^c$ structures on $W$ such that each $\frs\in\frS$ has the same restriction to $\partial{W}$. 

If $\circ\in\{\infty, -\}$, we further assume that there exists only finitely many $\frs\in\frS$ such that $F^\circ_{W,\frs;\omega}$ is non-vanishing. Then, there exists a cobordism map
\[
    F^\circ_{W,\frS;\omega}: HF^\circ(Y_1;{\omega|_{Y_1}})\rightarrow HF^\circ(Y_2;{\omega|_{Y_2}}),
\]
which is also well-defined up to overall multiplication by $z^x$ for $x\in\R$. Although addition is not well-defined in projective systems, we may find representatives of $F^\circ_{W,\frS;\omega}$ for $\frs\in\frS$ so that
\[
F^\circ_{W,\frS;\omega} \doteq \sum_{\frs\in \frS} F^\circ_{W,\frs;\omega}.
\]

If $\omega=\{\omega_1,\ldots,\omega_n\}$ is an $n$-tuple of closed $2$-forms on a $3$--manifold $Y$, we can define a $\Lambda_n[U]$-module $HF^\circ(Y,\frs;{\omega})$ as above, where $\Lambda_n$ is the $n$-variable Novikov ring over $\F_2$. All the theorems and lemmas in this section hold for this version.

Let $a = (a_1,\ldots,a_n)$ be an $n$-tuple of integers. We will use the notation
\[
    z^{a} := z_1^{a_1} \cdots z_n^{a_n}.
\]

\begin{lemma}[Juh\'{a}sz--Zemke \protect{\cite[Lemma~3.4]{JZ18}}]\label{lem:triviallytwisted}
    Let $W$ be a cobordism from $Y_1$ to $Y_2$ and $\omega=\{\omega_1,\ldots,\omega_n\}$ be an $n$-tuple of closed $2$-forms on $(W,\partial{W})$. Suppose $\frS \subseteq \spin^c(W)$ is a subset of $\spin^c$ structures on $W$. If $\circ \in \{-,\infty\}$, we further assume that there are only finitely many $\frs \in \frS$ where $F^\circ_{W,\frs} \neq 0$. Fix an arbitrary $\spin^c$ structure $\frs_0 \in \spin^c(W)$. Then,
    \[
        F^\circ_{W,\frS;\omega}\doteq\sum_{s\in\frS} z^{\langle i_*(\frs-\frs_0)\cup[\omega],[W,\partial W]\rangle} \cdot F^\circ_{W,\frs},
    \]
where $i_*:H^2(W;\Z)\to H^2(W;\R)$ is induced by the inclusion $i:\Z\to\R$. 
\end{lemma}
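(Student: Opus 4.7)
The plan is to reduce the identity to a single geometric computation by unpacking the construction of the perturbed cobordism map. By the definition recalled above, $F^\circ_{W,\frS;\omega}$ is, up to the overall $z^x$ ambiguity encoded by $\doteq$, the sum over $\frs\in\frS$ of the contributions from pseudoholomorphic configurations $\psi$ (disks for $1$-handles, triangles for $2$-handles) in the chosen Heegaard data with $\spin^c(\psi)=\frs$, each weighted by $z^{A_\omega(\psi)}$, where $A_\omega(\psi)=\int_{\widetilde{\cD}(\psi)}\omega$ and $\widetilde{\cD}(\psi)\subset W$ is the capped-off $2$-chain determined by the domain of $\psi$. The untwisted map $F^\circ_{W,\frs}$ counts the same configurations with weight $1$. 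Thus the lemma follows once we show (i) for each $\frs\in\frS$, $A_\omega(\psi)$ is constant on the set of configurations with $\spin^c(\psi)=\frs$, so that the $\frs$-contribution factors as $z^{A_\omega(\psi_\frs)}\cdot F^\circ_{W,\frs}$ for any reference choice $\psi_\frs$; and (ii) the constant $A_\omega(\psi_\frs)-A_\omega(\psi_{\frs_0})$ equals $\langle i_*(\frs-\frs_0)\cup[\omega],[W,\partial W]\rangle$.

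Both (i) and (ii) are consequences of a single geometric identity: for any two configurations $\psi,\psi'$ with matching endpoints,
\[
A_\omega(\psi)-A_\omega(\psi') \;=\; \langle i_*(\frs(\psi)-\frs(\psi'))\cup[\omega],\,[W,\partial W]\rangle.
\]
Indeed, taking $\frs(\psi)=\frs(\psi')$ gives (i), while applying it to $\psi_\frs$ and $\psi_{\frs_0}$ gives (ii); the remaining overall factor $z^{A_\omega(\psi_{\frs_0})}$ is absorbed into $\doteq$, and the finiteness hypothesis when $\circ\in\{-,\infty\}$ ensures the resulting sum is well-defined.

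To prove the geometric identity, the main point is that the relative $2$-cycle $C:=\widetilde{\cD}(\psi)-\widetilde{\cD}(\psi')$ defines a class in $H_2(W;\Z)$ equal to the Poincar\'e--Lefschetz dual of $\frs(\psi)-\frs(\psi')\in H^2(W;\Z)$. This identification is intrinsic to the construction of the $\spin^c$ structure attached to a Heegaard-Floer configuration in the work of Ozsv\'ath--Szab\'o: two configurations with the same endpoints have $\spin^c$-assignments differing exactly by the Poincar\'e--Lefschetz dual of their capped-domain difference. Since $\omega$ is compactly supported in the interior of $W$, it defines a relative class $[\omega]\in H^2(W,\partial W;\R)$, and the integral $\int_C\omega$ depends only on $[C]\in H_2(W;\R)$, equaling $\langle i_*(\frs(\psi)-\frs(\psi'))\cup[\omega],[W,\partial W]\rangle$ by the standard cup--cap duality $\langle\alpha\cup\beta,[W,\partial W]\rangle=\langle\beta,\alpha\cap[W,\partial W]\rangle$. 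The hardest part is bookkeeping: verifying the duality identification between capped-domain differences and $\spin^c$-structure differences in the cobordism (rather than closed $3$-manifold) setting and matching sign conventions; once those are pinned down the remainder is formal manipulation.
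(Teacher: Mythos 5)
A preliminary remark on the comparison you asked for: the paper does not prove \cref{lem:triviallytwisted} at all --- it is imported verbatim from Juh\'asz--Zemke \cite[Lemma~3.4]{JZ18} and used as a black box --- so there is no internal proof to measure your argument against. On its own terms, your outline isolates the correct mechanism: the twisted and untwisted maps count the same moduli spaces, the weight changes under a change of (capped) domain by the integral of $\omega$ over the associated $2$-cycle, that cycle is dual to the difference of $\spin^c$ structures, contributions seen by the ends vanish because $\omega$ is compactly supported in the interior of $W$, and the leftover constant is absorbed into $\doteq$.

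As a standalone proof, however, there is a genuine gap between your geometric identity and your claim (i). The identity compares two configurations \emph{with matching endpoints}, whereas (i) asserts that $A_\omega(\psi)$ depends only on $\frs_w(\psi)$, i.e.\ is constant as the endpoints $(x,y)$ vary over all generators; this stronger statement is what you need for the $\frs$-component of the twisted chain map to be a single power of $z$ times $F^\circ_{W,\frs}$, and it does not follow from the matching-endpoints case. Establishing it requires comparing triangles with different endpoints by splicing Whitney disks in the two ends, checking that the spliced pieces carry zero weight (this is where $\omega|_{Y_i}=0$ and the identification of the trivially twisted complexes of the ends with the untwisted complexes over $\Lambda$ enter), and choosing the capping $2$-chains coherently so that the statement holds at chain level rather than only up to unspecified homotopies. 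Two further points are deferred silently: the duality between capped-domain differences and $\spin^c$-differences must be invoked in the Heegaard-triple setting, with care about absolute versus relative classes (the Lefschetz dual of a class in $H^2(W;\Z)$ lives in $H_2(W,\partial W)$, so the comparison with your cycle in $H_2(W)$ is itself part of the argument); and a general cobordism map is a composite of $1$-, $2$- and $3$-handle maps (the $1$- and $3$-handle maps are not defined by holomorphic counts), so one also needs additivity of the evaluation $\langle i_*(\frs-\frs_0)\cup[\omega],[W,\partial W]\rangle$ under gluing along hypersurfaces where $\omega$ vanishes. None of this is off-track --- it is precisely the content of the cited lemma --- but relegating it all to ``bookkeeping'' leaves your argument a sketch of \cite[Lemma~3.4]{JZ18} rather than a proof of it.
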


\bibliographystyle{amsalpha}
\bibliography{references}
\end{document}